\documentclass[a4paper, oneside, british, 11pt]{article}

\usepackage{geometry} 
\usepackage{fullpage} 
\usepackage{fontspec} 
\usepackage{babel} 
\usepackage{float} 
\usepackage{authblk} 
\usepackage{lipsum} 
\usepackage{csquotes} 
\usepackage{wrapfig} 
\usepackage{datetime} 

\usepackage{subcaption} 
\usepackage{multicol} 
\usepackage[bottom]{footmisc} 
\usepackage{todonotes} 
\usepackage{hyperref} 

\usepackage{tikz}
\usetikzlibrary{patterns,arrows}
\tikzset{shorten <>/.style={shorten >=#1, shorten <=#1}}

\usepackage{enumitem} 
\setlist[enumerate]{nosep, label=(\roman*), leftmargin=*}

\usepackage[backend=biber, giveninits=true]{biblatex} 
\usepackage{graphicx} 
\graphicspath{{images/}}

\usepackage{color} 

\usepackage{amsmath, amssymb, amsfonts}
\usepackage{mathtools, stmaryrd}
\usepackage{tikz-cd}
\usepackage{mathUtil}

\usepackage{amsthm}

\renewcommand{\Box}{\boxempty}

\newcommand{\mbf}{\mathbf}

\newcommand{\msf}{\mathsf}

\makeatletter
\title{Fischer-Servi logic does not have interpolation}
\author[1]{Rodrigo Nicolau Almeida}
\author[1]{Nick Bezhanishvili}
\author[1,2]{Simon Lemal}
\affil[1]{Institute for Logic, Language and Computation, Universiteit van Amsterdam}
\affil[2]{Department of Mathematics, Université du Luxembourg}
\date{}
\makeatother

\begin{document}

\maketitle

\begin{abstract}
We prove that the Fischer-Servi logic $\mathsf{IK}$ does not have the (Craig) interpolation property. This is  obtained by showing that the corresponding class of modal Heyting algebras lacks the amalgamation property. We also generalize this result to some extensions of the Fischer-Servi logic
such as $\mathsf{IT}$, $\mathsf{IK4}$, $\mathsf{IS4}$, and 
$\mathsf{IGL}$.
\end{abstract}

\section{Introduction}

The Craig interpolation property (CIP) is an important property of logical systems. For propositional logics, interpolation is closely connected to the notion of (super)amalgamation of the corresponding algebraic models of a given logic. We refer to the recent handbook on interpolation \cite{HandbookInterp26} for the current state of the art on Craig interpolation, as well as to Chapters \cite{BtCI26, Fus26, Met26} of this handbook for detailed discussions of the connections between (super)amalgamation and Craig interpolation.

Maksimova’s classical result \cite{Maksimova1977} states that there are exactly seven consistent superintuitionistic logics with Craig interpolation. This result was proved by characterizing all seven nontrivial varieties of Heyting algebras with the amalgamation property (in varieties of Heyting algebras, the notions of amalgamation and superamalgamation coincide); for the proof we refer, for example, to \cite[Section 14.4]{Chagrov1997-cr}. Using similar techniques, Maksimova \cite{Maks1979} also showed that there are at most 37 modal logics extending $\mathsf{S4}$ that have the Craig interpolation property (see, e.g., \cite{Chagrov1997-cr, Fus26, GM05}).

Alongside superintuitionistic and classical modal logics, intuitionistic modal logics play an important role (see e.g., \cite{paivaartemov}). One of the first, and still most influential, systems of intuitionistic modal logic was introduced by Fischer-Servi \cite{fischerservi1984}. This system was later studied by Simpson \cite{Simpson1994} and  Wolter and Zakharyaschev \cite{Wolter1997IntuitionisticML}, among others. G.~Bezhanishvili \cite{Bezhanishvili1998,Bezhanishvili1999,Bezhanishvili2000} investigated the intuitionistic modal logic $\mathsf{MIPC}$, which provides the one-variable fragment of intuitionistic predicate calculus in the same way that $\mathsf{S5}$ provides the one-variable fragment of classical predicate logic \cite{Halmos1954-1956}.

Interpolation in intuitionistic modal logic has recently attracted renewed attention. In this paper we adopt the convention used by van der Giessen \cite{vandergiessenthesis} and collaborators, of denoting by
\begin{enumerate}
    \item $\mathsf{iL}=\mathsf{IPC}\oplus \mathsf{L}_{\Box}$, the logic with $\Box$-only and $\Box$-only axioms, for some classical modal logic $\mathsf{L}$;
    \item $\mathsf{CL}=\mathsf{IPC}\oplus \mathsf{L}_{\Box,\Diamond}$, the logic with both $\Box$ and $\Diamond$, and axioms given for both modalities over $\mathsf{L}$;
    \item  $\mathsf{IL}=\mathsf{IK}\oplus \mathsf{L}_{\Box,\Diamond}$ the extension of Fischer-Servi's basic logic $\mathsf{IK}$ with axioms in both modalities (except when otherwise noted!).
\end{enumerate}

For clarity, we consider only  \emph{normal} intuitionistic modal logics. We give a short summary of the results concerning interpolation properties known to us at the time of writing this paper.\footnote{The reader (in 2026) can find in Taishi Kurahashi's frequently updated  \href{https://www2.kobe-u.ac.jp/~tk/jp/notes/ULIP.html}{webpage}, a summary of most known results on Craig interpolation and its uniform and Lyndon variants for intuitionistic, modal logics and provability logics. We also note that the statements below are given for CIP, though many of the stated logics also enjoy some of these stronger properties.}:
\begin{enumerate}
    \item The logics $\mathsf{iK}$ and $\mathsf{iD}$ have CIP \cite[Theorem 34]{Iemhoff2019}. The logic $\mathsf{iS4}$ also has CIP \cite{CZERMAK1975381}, as do many other related systems  \footnote{See the appendix for a short proof, relying on standard techniques from algebra and duality theory, of the fact that $\mathsf{iS4},\mathsf{iT},\mathsf{iK4}$, all have CIP. This was mentioned as a conjecture, e.g., in \cite[Example 2.2.2]{vandergiessenthesis}.}.
    \item In the family of ``intuitionistic provability logics", the logics $\mathsf{iSL}$ \cite{Fre2024} and $\mathsf{iGL}$\cite{vanderGiessen2021} have CIP; the logic $\mathsf{KM}$ (see \cite{Muravitsky2014}) also has CIP.
    \item Among the ``lax logic" family of logics, the Lax Logic has CIP \cite{Iemhoff2024}.
    \item The logics $\mathsf{CK}$ and $\mathsf{WK}=\mathsf{CK}\oplus \neg\Diamond\bot$ were recently shown to have CIP \cite{vandergiessenshilito}.
\end{enumerate}

We also remark that prior to much of this literature,  G.~Bezhanishvili and Marx showed (unpublished, early 2000s) that $\mathsf{MIPC}$ does not have the interpolation property. This result was obtained using techniques from Areces and Marx \cite{Marx1998-MAAFOI} and Marx \cite{Marx1998}, by showing that the corresponding variety of monadic Heyting algebras does not have the amalgamation property.

In this paper, we continue this line of work, showing that the Fischer-Servi logic and some of its close neighbours, do not have the Craig interpolation property. This stands in contrast to the above abundance of $\mathsf{iL}$-logics with the interpolation property, showing that the addition of the diamond and the compatibility conditions poses challenges to the interpolation property. We prove this by demonstrating that the corresponding variety of modal Heyting algebras does not have the amalgamation property. This, in turn, is established using the duality between modal Heyting algebras and relational Esakia spaces (originally obtained by Palmigiano \cite{PalmigianoDualities2004}). Using this duality, we construct a diagram of finite relational Kripke frames that does not co-amalgamate. Consequently, the corresponding modal Heyting algebras do not amalgamate, and hence the Fischer-Servi logic lacks the interpolation property. We also show that, with slight modifications, this example can be extended to some other intuitionistic modal logics of interest:
\begin{enumerate}
    \item The logics $\mathsf{IT}$, $\mathsf{IK4}$ and $\mathsf{IS4}$, introduced by Simpson \cite{Simpson1994} and studied e.g. in \cite{Girlando2023}.
    \item The logic $\mathsf{IGL}$ introduced in \cite{IGLlogicdasmarinvan} and studied e.g. in \cite{Aguilera2025}.
\end{enumerate}

We conclude with some possible directions in the study of interpolation in extensions of $\mathsf{IK}$.

\section{Super-Fischer-Servi logics, FS-algebras and IK-frames}

Let $\mathsf{IPC}$ denote the intuitionistic propositional  calculus. We recall the definition of $\mathsf{IK}$ logic, as given in, e.g., \cite{Simpson1994}:

\begin{definition}
    A set of formulas $L$ in the language $\mathcal{L}_{\Box,\Diamond}=(\wedge,\vee,\rightarrow,\Box,\Diamond,\bot,\top)$, is called a \emph{super-Fischer-Servi logic} (super FS-logic), if it contains the axioms:
    \begin{enumerate}
        \item $\mathsf{IPC}\subseteq L$;
        \item $\Box(p\rightarrow q)\rightarrow (\Box p\rightarrow \Box q)$;
        \item $\Box(p\rightarrow q)\rightarrow (\Diamond p\rightarrow \Diamond q)$;
        \item $\Diamond(p\vee q)\rightarrow (\Diamond p\vee \Diamond q)$;
        \item $(\Diamond p\rightarrow\Box q) \rightarrow \Box(p\rightarrow q)$
        \item $\Diamond\bot\rightarrow\bot$,
    \end{enumerate}
    and is closed under Uniform Substitution, Modus Ponens and Necessitation. We write $\mathsf{IK}$ for the smallest super-FS logic. We write $\phi\in L$ to mean that $\phi$ is a \emph{theorem} of $L$.
\end{definition}

An equivalent axiomatization over intuitionistic logic, which will be more convenient for our purposes, was given in \cite{Wolter1999}: 

\vspace{2mm}

\begin{enumerate}
    \item The normality axioms $\Box(p\wedge q)\leftrightarrow \Box p\wedge \Box q$ and $\Box\top=\top$ and $\Diamond(p\vee q)\leftrightarrow \Diamond p\vee \Diamond q$, and $\Diamond\bot\leftrightarrow\bot$;
    \item The connection axioms $\Diamond(p\rightarrow q)\rightarrow (\Box p\rightarrow \Diamond q)$ and $(\Diamond p\rightarrow \Box q)\rightarrow \Box(p\rightarrow q)$.
\end{enumerate}


\subsection{FS-algebras}

We recall the notion of $\mathbf{FS}$-algebras:

\begin{definition}
    A tuple $(H,\Box,\Diamond)$ is called an $\mathbf{FS}$-algebra if $H$ is a Heyting algebra and $\Box, \Diamond: H \to H$ are maps satisfying  the following axioms for each $a, b\in H$:

\vspace{2mm}
    
    \begin{enumerate}
        \item $\Box 1=1$ and $\Diamond 0=0$;
        \item $\Box(a\wedge b)=\Box a\wedge\Box b$ and $\Diamond (a\vee b)=\Diamond a\vee \Diamond b$;
        \item $\Diamond (a\rightarrow b)\leq \Box a\rightarrow \Diamond b$;
        \item $\Diamond a\rightarrow \Box b\leq \Box(a\rightarrow b)$.
    \end{enumerate}

    \vspace{2mm}
    
    We denote by $\mathbf{FS}_{\mathrm{alg}}$ the category of $\mathbf{FS}$-algebras and $\Box$ and $\Diamond$-preserving Heyting homomorphisms.
\end{definition}

Given an $\mathbf{FS}$-algebra $A$, and a formula $\phi(\overline{p})$, we can extend an assignment $v\colon \overline{p}\to A$, in the usual way to a map $\overline{v}\colon \mathrm{Form}_{\mathcal{L}_{\Box,\Diamond}}(\overline{p})\to A$. We write $(A,v)\vDash \phi(\overline{p})$
 to mean that $\overline{v}(\phi)=1$. We write $A\vDash \phi$ to mean that this holds for any assignment.

Given $L$ a super FS-logic, we generically denote by $(\mathbf{FS}_{\mathrm{L}})_{\mathrm{alg}}$ the category of $\mathbf{FS}$-algebras which additionally validate all the axioms from $L$, i.e., if for each $\phi\in L$, and each $A\in (\mathbf{FS}_{\mathrm{L}})_{\mathrm{alg}}$ we have $A\vDash \phi$.

\begin{notation}
    Given $X$ a set, and $L$ a super FS-logic, we denote by $\mathcal{F}_{L}(X)$ the free $\mathbf{FS}_{\mathrm{L}}$-algebra on $X$ many generators. This is precisely the quotient of $\mathrm{Form}_{\mathcal{L}_{\Box,\Diamond}}$ given by $\phi\sim \psi$ if and only if $\phi\leftrightarrow\psi\in L$.
\end{notation}

The following is then the basic algebraic completeness, following from the general theory of algebraizability \cite[Theorem 2.12]{Font2016-dk} (see also \cite[Chapter 8]{Chagrov1997-cr} for the analogous cases of intuitionistic and classical modal logics): 

\begin{theorem}\label{thm: algebraic completeness theorem}
    Let $L$ be a super FS-logic.
    For each formula $\phi(\overline{p})\in \mathcal{L}_{\Box,\Diamond}$ the following are equivalent:
    \begin{enumerate}
        \item $\phi\in L$;
        \item For any $\mathbf{FS}_{\mathrm{L}}$-algebra $A$, we have $A\vDash \phi$.
        \item In $\mathcal{F}_{L}(\overline{p})$, we have that $[\phi]=1$.
    \end{enumerate}
\end{theorem}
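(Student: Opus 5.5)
The plan is to prove the cycle (i)$\Rightarrow$(ii)$\Rightarrow$(iii)$\Rightarrow$(i), using the Lindenbaum--Tarski construction for the last step. Throughout, the key observation is that the Wolter axiomatization of $\mathsf{IK}$ given above corresponds exactly, axiom by axiom, to the defining equations of an $\mathbf{FS}$-algebra. This is what makes the interpretation sound.

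\emph{(i)$\Rightarrow$(ii).} We argue by induction on the derivation of $\phi$ in $L$. Fix an $\mathbf{FS}_{\mathrm{L}}$-algebra $A$ and an assignment $v$.
\begin{enumerate}
\item Theorems of $\mathsf{IPC}$ evaluate to $1$ in any Heyting algebra.
\item Each normality and connection axiom is valid because it is literally one of the algebra axioms. For example, $\overline{v}(\Diamond(p\rightarrow q))\leq \overline{v}(\Box p\rightarrow\Diamond q)$ gives $\overline{v}(\Diamond(p\rightarrow q)\rightarrow(\Box p\rightarrow\Diamond q))=1$. Here we use that the Wolter axiomatization generates the same logic as the original list, as stated above.
\item The additional axioms of $L$ hold in $A$ by the definition of $\mathbf{FS}_{\mathrm{L}}$.
\end{enumerate}
For the rules:
\begin{enumerate}
\item Modus Ponens preserves the value $1$, since $a=1$ and $a\rightarrow b=1$ give $b=1$.
\item Necessitation preserves it, since $\Box 1=1$.
\item Uniform Substitution preserves validity under \emph{all} assignments, because $\overline{v}(\phi[\psi/p])=\overline{v'}(\phi)$, where $v'$ agrees with $v$ except that it sends $p$ to $\overline{v}(\psi)$.
\end{enumerate}

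\emph{(ii)$\Rightarrow$(iii).} This is immediate once we know that $\mathcal{F}_L(\overline{p})$ is an $\mathbf{FS}_{\mathrm{L}}$-algebra: take the assignment $p\mapsto[p]$, under which $\overline{v}(\phi)=[\phi]$.

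\emph{(iii)$\Rightarrow$(i).} By definition, $[\phi]=[\top]$ means that $\phi\leftrightarrow\top\in L$, and hence $\phi\in L$ by $\mathsf{IPC}$ reasoning and Modus Ponens.

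The main work, and the only real obstacle, is verifying that the Lindenbaum--Tarski quotient is well defined and is an $\mathbf{FS}_{\mathrm{L}}$-algebra.
\begin{enumerate}
\item \emph{Congruence.} The relation $\sim$ must be a congruence for $\Box$ and $\Diamond$. From $\phi\leftrightarrow\psi\in L$, Necessitation gives $\Box(\phi\rightarrow\psi)$, and the $\mathsf{K}$-style axioms (ii) and (iii) of the definition of super FS-logic then yield $\Box\phi\rightarrow\Box\psi$ and $\Diamond\phi\rightarrow\Diamond\psi$. Congruence for the Heyting connectives is standard.
\item \emph{Algebra structure.} The quotient is a Heyting algebra with order $[\phi]\leq[\psi]$ iff $\phi\rightarrow\psi\in L$. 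The $\mathbf{FS}$ equations then follow from the Wolter axioms being theorems of $L$.
\item \emph{Validating $L$.} We must check that the quotient validates every $\chi\in L$ under an \emph{arbitrary} assignment $p_i\mapsto[\psi_i]$. The value of $\chi$ under this assignment is $[\chi[\overline{\psi}/\overline{p}]]$, which equals $1$ by closure of $L$ under Uniform Substitution.
\end{enumerate}
This last point is the step where care is needed. It is also exactly what the general algebraizability theorem \cite[Theorem 2.12]{Font2016-dk} packages, so alternatively one can invoke that result directly, after noting that $L$ is algebraizable with equivalence formula $p\leftrightarrow q$ and defining equation $p\approx\top$.
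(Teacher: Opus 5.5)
Your proof is correct, and it takes a more explicit route than the paper. The paper gives no argument of its own for this theorem: it appeals to the general theory of algebraizability (Font, Theorem 2.12; Chagrov--Zakharyaschev, Chapter 8).

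Your Lindenbaum--Tarski construction is the concrete content behind that citation. It isolates the only two points where the particular logic matters:
\begin{enumerate}
\item $\sim$ is compatible with $\Box$ and $\Diamond$, which uses Necessitation together with axioms (ii) and (iii) of the definition;
\item the quotient $\mathcal{F}_{L}(\overline{p})$ validates every formula of $L$ under arbitrary assignments, which uses Uniform Substitution.
\end{enumerate}
The citation route is shorter. However, applying it still requires checking that $L$ is algebraizable with $p\leftrightarrow q$ and $p\approx\top$, and that its equivalent algebraic semantics is exactly $(\mathbf{FS}_{\mathrm{L}})_{\mathrm{alg}}$. That check needs the same facts: compatibility of $\leftrightarrow$ with the modalities, and the correspondence between the $\mathbf{FS}$-equations and the axioms of $\mathsf{IK}$. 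Your version makes all of this self-contained.

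One simplification is available. The paper defines $(\mathbf{FS}_{\mathrm{L}})_{\mathrm{alg}}$ as the $\mathbf{FS}$-algebras that validate every $\phi\in L$. Under that definition, (i)$\Rightarrow$(ii) holds outright, so your induction on derivations is only needed if $\mathbf{FS}_{\mathrm{L}}$-algebras are instead defined by validating a chosen axiom basis of $L$.
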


\subsection{Fischer-Servi frames and spaces}

The  definitions in this section can be found in  \cite{PalmigianoDualities2004}.

\begin{definition}
    Let $F = (X , \leq ,R)$ be a tuple where $(X, \leq)$ is a poset and $R$ a binary relation. We say that $F$ is an \emph{$\mathbf{FS}$-frame} provided the following conditions are satisfied:

    \begin{itemize}
        \item \textbf{F1}:  $x' \geq xRy \implies \exists y'.x'Ry'\geq y$;
        \item \textbf{F2}:  $xRy\leq y' \implies \exists x'. x \leq x'Ry'$.
    \end{itemize}
\end{definition}

By convention, horizontal arrows will correspond to $R$, and vertical arrows to $\leq$. We will also use colors to assist in the distinction: $R$ will be red, and $\leq$ will be blue.

\begin{figure}[h]
    \centering
\begin{tikzpicture}
     \node at (0,0) {$x$};
     \node at (0,2) {$x'$};
     \node at (2,2) {$y'$};
     \node at (2,0) {$y$};
     \node at (1,-0.5) {$\mathbf{F1}$};

     \draw[->, blue, thick] (0,0.3) -- (0,1.7);
     \draw[->, red, thick] (0.3,0) -- (1.7,0);
     \draw[->,dotted,red] (0.3,2) -- (1.7,2);
     \draw[->,dotted, blue] (2,0.3) -- (2,1.7);
    \end{tikzpicture}
\qquad
\begin{tikzpicture}
     \node at (3,0) {$x$};
     \node at (3,2) {$x'$};
     \node at (5,2) {$y'$};
     \node at (5,0) {$y$};
     \node at (4,-0.5) {$\mathbf{F2}$};

     \draw[->,dotted, blue] (3,0.3) -- (3,1.7);
     \draw[->, red, thick] (3.3,0) -- (4.7,0);
     \draw[->,dotted, red] (3.3,2) -- (4.7,2);
     \draw[->, blue, thick] (5,0.3) -- (5,1.7);
 \end{tikzpicture}
    \caption{Confluence conditions for Fischer-Servi}
    \label{fig:placeholder}
\end{figure}
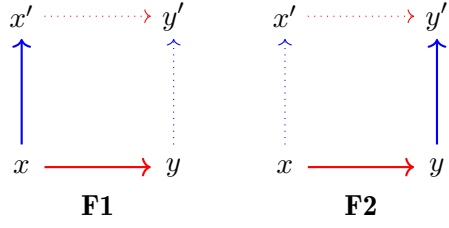

We will also need the morphisms of Fischer-Servi frames:

\begin{definition}
Let $f\colon X\to Y$ be a map between $\mathbf{FS}$-frames. We say that $f$ is an $\mathbf{FS}$-morphism if:
\begin{enumerate}
\item $f$ is a $\leq$-p-morphism.
\item If $xRy$, then $f(x)Rf(y)$.
\item If $f(x)Rz$, then $z\leq f(x')$ for some $xRx'$;
\item If $f(x)\leq m$ and $mRz$, then there is some $x\leq x'$ and $x'Rx''$ and $f(x'')\leq z$.
\end{enumerate}
\end{definition}

We refer to (iii) as the ``weak back condition" and (iv) as the ``strong back condition". Recall that Esakia spaces are order-topological spaces dual to Heyting algebras \cite{Esakiach2019HeyAlg}. We will mostly be concerned with finite Esakia spaces, which are equivalent to finite posets; the reader unfamiliar with this duality can safely consider such finite posets as the object of interest in this paper, with the sole exception of Lemma \ref{lem: FS spaces are FS frames} below.

\begin{notation}
    Given a frame $(X,S)$, where $S\subseteq X\times X$ is any relation, and $U\subseteq X$ is any subset, we denote the following subsets:    \begin{equation*}
        \Diamond_{S}U=\{x\in X : \exists y, xRy \wedge y\in U\} \text{ and } \Box_{S}U=\{x\in X : \forall y(xRy \rightarrow y\in U\}.
    \end{equation*}
\end{notation}

\begin{definition}
    A tuple $(X,\leq,R,\tau)$ is called an $\mathbf{FS}$-space\footnote{In \cite{PalmigianoDualities2004}, these are called $\mathbf{IK}$-spaces, and the associated morphisms are called $\mathbf{IK}$-morphisms.} if $(X,\leq,\tau)$ is an Esakia space, and also:
    \begin{enumerate}
    \item $R[x]=R[{\uparrow}x]\cap {\downarrow}R[x]$
        \item For each $x\in X$, $R[x]$ is closed, and $R[{\uparrow}x]$ is a closed upset.
        \item Whenever $U$ is a clopen upset, $\Diamond_{R}U$ is a clopen upset, and $\Box_{\leq\circ R}U$ is a clopen upset.
    \end{enumerate}
    We denote by $\mathbf{FS}_{\mathrm{sp}}$ the category of $\mathbf{FS}$-spaces and continuous $\mathbf{FS}$-morphisms.
\end{definition}

We stress that $\mathbf{FS}$-spaces are also $\mathbf{FS}$-frames, and finite $\mathbf{FS}$-frames are $\mathbf{FS}$-spaces:

\begin{lemma}\label{lem: FS spaces are FS frames}\cite[Lemma 4.1.1.3]{PalmigianoDualities2004}
Let $(X,\leq,R,\tau)$ be a tuple, where $(X,\leq,\tau)$ is an Esakia space, and $R\subseteq X\times X$. Then:
\begin{enumerate}
    \item \label{eq: finite part} If $X$ is finite, and it satisfies $R[x]=R[{\uparrow}x]\cap {\downarrow}R[x]$, then $(X,\leq,R)$ is an $\mathbf{FS}$-frame if and only if $(X,\leq,R,\tau)$ is an $\mathbf{FS}$-space.
    \item \label{eq: infinite part} If $(X,\leq,R,\tau)$ is an $\mathbf{FS}$-space, then $(X,\leq,R)$ is an $\mathbf{FS}$-frame.
\end{enumerate}
\end{lemma}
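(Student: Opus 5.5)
We must prove both parts of the lemma directly from the definitions of $\mathbf{FS}$-frame and $\mathbf{FS}$-space. The key observation is that condition (i) of an $\mathbf{FS}$-space, $R[x]=R[{\uparrow}x]\cap{\downarrow}R[x]$, is closely tied to \textbf{F1}, while the requirement in (iii) that $\Diamond_R U$ be an upset is tied to \textbf{F2}. The topological conditions become trivial in the finite case.

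\emph{Part (ii).} Let $(X,\leq,R,\tau)$ be an $\mathbf{FS}$-space.

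For \textbf{F1}, assume $x\leq x'$ and $xRy$. Then $y\in R[x]\subseteq{\downarrow}R[x']$, which is exactly what \textbf{F1} asks for. The inclusion $R[x]\subseteq{\downarrow}R[x']$ needs justification. I would get it from (iii) together with the Esakia separation property.

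Suppose, for contradiction, that $y\notin{\downarrow}R[x']$. The set $R[x']$ is closed by (ii), so ${\uparrow}R[x']$ is a closed upset, as Esakia spaces have closed up-closures of closed sets. Since $y\notin{\downarrow}R[x']$, for each $z\in R[x']$ we have $z\not\leq y$. By the Priestley separation axiom and compactness, there is a clopen upset $U$ with ${\uparrow}R[x']\subseteq U$ and $y\notin U$.

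Now $\Box_{\leq\circ R}U$ is relevant here, but it is more convenient to use $\Diamond_R$ on the complement. Instead, I would use the fact that $\Box_{\leq\circ R}U$ is an upset containing $x'$ but, one hopes, not $x$. Indeed, $x\,(\leq\circ R)\,y$ with $y\notin U$ gives $x\notin\Box_{\leq\circ R}U$. On the other hand, every $(\leq\circ R)$-successor of $x'$ lies in ${\uparrow}R[{\uparrow}x']$, and this should be contained in $U$. That requires separating ${\uparrow}R[{\uparrow}x']$ rather than just $R[x']$.

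So the right formulation is the following. Use $R[{\uparrow}x']$, which is a closed upset by (ii). Note that $y\notin R[{\uparrow}x']$ must be shown. Granting it, separate to obtain a clopen upset $U\supseteq R[{\uparrow}x']$ with $y\notin U$. Then $x'\in\Box_{\leq\circ R}U$ and $x\notin\Box_{\leq\circ R}U$, which contradicts upward closure of $\Box_{\leq\circ R}U$ since $x\leq x'$.

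Hence $y\in R[{\uparrow}x']$: there are $x''\geq x'$ and $z$ with $x''Rz$ and $z\geq y$. This is not yet \textbf{F1}, because the $R$-step starts from $x''$ rather than $x'$. To fix this, apply condition (i) at $x'$: this should yield a witness $z'\in R[x']$ with $z'\geq y$, i.e.\ $y\in{\downarrow}R[x']$.

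Handling exactly this interplay between ${\uparrow}x'$ and $x'$ via (i) is the step I expect to be the main obstacle. My first attempt would be to instead separate $y$ from the closed set ${\downarrow}R[x']$ (closed as the down-closure of a closed set) using a clopen downset, pass to its complement upset, and evaluate $\Box_{\leq\circ R}$, invoking (i) to control $R[{\uparrow}x']$ against ${\downarrow}R[x']$.

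For \textbf{F2}, assume $xRy\leq y'$. Using separation, for every clopen upset $U\ni y'$ we have $x\in\Diamond_R U$, so some $x_U\geq x$ has an $R$-successor in $U$. The upset property comes from (iii). A compactness argument over the family of such $U$, using closedness of $R[{\uparrow}x]$ from (ii), then gives $y'\in R[{\uparrow}x]$, that is, $x\leq x'Ry'$.

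\emph{Part (i).} Let $X$ be finite with the discrete topology and assume condition (i).

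Suppose first that $X$ is an $\mathbf{FS}$-frame. All subsets of $X$ are closed, so (ii) reduces to showing that $R[{\uparrow}x]$ is an upset, which is exactly \textbf{F2}. For (iii), $\Diamond_R U$ is an upset by \textbf{F1}: if $xRy\in U$ and $x\leq x'$, then \textbf{F1} gives $x'Ry'\geq y$, so $y'\in U$. The set $\Box_{\leq\circ R}U$ is always an upset, because $\leq\circ R$ absorbs the order on the left.

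The converse is the content of part (ii), which in the finite case simplifies drastically, since every upset is clopen.
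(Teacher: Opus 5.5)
The paper gives no proof of this lemma; it is quoted from Palmigiano. Your proposal therefore has to stand on its own, and as written it does not. Part (ii) never establishes \textbf{F1}. Since you reduce the converse of part (i) to part (ii), that direction is left open as well.

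The root problem is that in part (ii) you attach the space conditions to the wrong confluence properties.

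\textbf{F2} is exactly the upset clause of condition (ii). If $xRy\leq y'$, then $y\in R[{\uparrow}x]$, which is an upset, so $y'\in R[{\uparrow}x]$, i.e.\ $x\leq x'Ry'$. Your compactness argument for \textbf{F2} is unnecessary, and its first step is false: from $y'\in U$ you cannot conclude $x\in\Diamond_R U$, because $U$ is an upset and $y\leq y'$. For example, $x<x'$, $y<y'$, $R=\{(x,y),(x',y')\}$ is a finite $\mathbf{FS}$-space with $x\notin\Diamond_R{\uparrow}y'$.

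\textbf{F1}, by contrast, comes from the $\Diamond_R$ clause of (iii), not from $\Box_{\leq\circ R}$ and not from condition (i).
\begin{itemize}
\item As you note yourself in part (i), $\Box_{\leq\circ R}U$ is an upset for every $U$, so that clause carries no information here.
\item Your derived ``contradiction'' is not one: $x\notin\Box_{\leq\circ R}U$ and $x'\in\Box_{\leq\circ R}U$ with $x\leq x'$ is perfectly consistent with upward closure.
\item Condition (i) cannot rescue the argument. Applying (i) at $x'$ to get $y\in R[x']$ already requires $y\in{\downarrow}R[x']$, which is what you are trying to prove.
\item The separation step has a direction slip: $y\notin{\downarrow}R[x']$ means $y\not\leq z$ for $z\in R[x']$, not $z\not\leq y$.
\end{itemize}

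The missing argument is short. Let $x\leq x'$ and $xRy$, and suppose $y\notin{\downarrow}R[x']$.
\begin{itemize}
\item For each $z\in R[x']$ we have $y\not\leq z$, so Priestley separation gives a clopen upset $U_z\ni y$ with $z\notin U_z$.
\item $R[x']$ is closed by (ii), hence compact, so finitely many complements $X\setminus U_{z_1},\dots,X\setminus U_{z_n}$ cover it.
\item Then $U=U_{z_1}\cap\dots\cap U_{z_n}$ is a clopen upset with $y\in U$ and $U\cap R[x']=\emptyset$.
\item Since $xRy$, we have $x\in\Diamond_R U$. As $\Diamond_R U$ is an upset by (iii) and $x\leq x'$, also $x'\in\Diamond_R U$, contradicting $U\cap R[x']=\emptyset$.
\end{itemize}
In the finite case, simply take $U={\uparrow}y$.

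Your closing ``first attempt'' already produces the right clopen upset. You only need to apply $\Diamond_R$ to it rather than $\Box_{\leq\circ R}$. Your forward direction of part (i) is fine. It pairs \textbf{F1} with $\Diamond_R U$ being an upset and \textbf{F2} with $R[{\uparrow}x]$ being an upset, which is correct. This is the opposite of the pairing you announced at the start and used in part (ii).
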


The following was shown in \cite[Theorem 6.1.11]{PalmigianoDualities2004}:

\begin{theorem}
    The categories $\mathbf{FS}_{\mathrm{alg}}$ and $\mathbf{FS}_{\mathrm{sp}}$ are dually equivalent.
\end{theorem}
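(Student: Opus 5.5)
I would prove this by lifting Esakia duality to the modal setting, following the Jónsson–Tarski pattern adapted to the two modalities.

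\textbf{From algebras to spaces.} Given an $\mathbf{FS}$-algebra $(H,\Box,\Diamond)$, let $X_H$ be the Esakia dual of $H$: prime filters, ordered by inclusion, with the Priestley topology generated by the sets $\varphi(a)=\{P : a\in P\}$ and their complements. On $X_H$ I define
\[
P \mathrel{R} Q \iff \Box^{-1}[P]\subseteq Q \text{ and } Q\subseteq \Diamond^{-1}[P],
\]
that is, $\Box a\in P$ implies $a\in Q$, and $a\in Q$ implies $\Diamond a\in P$. The central step is a pair of representation lemmas:
\[
\varphi(\Diamond a)=\Diamond_R\varphi(a), \qquad \varphi(\Box a)=\Box_{\leq\circ R}\varphi(a).
\]
In each, one inclusion is immediate from the definition of $R$ and the fact that $\Box$ is monotone.

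\textbf{The existence lemmas (main obstacle).}
\begin{enumerate}
\item For $\Diamond$: suppose $\Diamond a\in P$. Take the filter generated by $a$ and $\Box^{-1}[P]$, and the ideal $\{b : \Diamond b\notin P\}$. This set is an ideal because $\Diamond$ preserves joins and $P$ is prime. I check the filter and ideal are disjoint using the axiom $\Diamond(a\to b)\leq \Box a\to\Diamond b$ together with monotonicity of $\Diamond$. The prime filter theorem then gives the required $Q$.
\item For $\Box$: suppose $\Box a\notin P$. I first choose a prime $Q$ extending $\{b:\Box b\in P\}$ and avoiding $a$. I then need a prime $P'\supseteq P$ with $\Diamond[Q]\subseteq P'$ and $\Box^{-1}[P']\subseteq Q$. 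I take the filter generated by $P\cup\Diamond[Q]$ against the ideal generated by $\{\Box c : c\notin Q\}$. Disjointness is where $\Diamond a\to\Box b\leq\Box(a\to b)$ is used.
\end{enumerate}
I expect this second existence argument, in particular choosing $Q$ and $P'$ simultaneously so that both conditions hold, to be the delicate point. I would first try a Zorn's-lemma argument over pairs of filters and ideals if the two-step choice fails.

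\textbf{Checking the space axioms.} With the representation lemmas in hand, I verify the conditions defining an $\mathbf{FS}$-space.
\begin{itemize}
\item Condition (iii) follows directly from the representation lemmas.
\item Closedness of $R[x]$ follows because $R[x]$ is the intersection of the closed sets $\varphi(b)$ for $\Box b\in x$ and the complements of $\varphi(b)$ for $\Diamond b\notin x$.
\item $R[{\uparrow}x]$ is a closed upset by a similar intersection, using compactness.
\item For $R[x]=R[{\uparrow}x]\cap{\downarrow}R[x]$, a $Q$ on the right satisfies the $\Box$-condition via ${\uparrow}x$ and the $\Diamond$-condition via the element above it in $R[x]$.
\end{itemize}

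\textbf{From spaces to algebras, morphisms, and naturality.} Conversely, for an $\mathbf{FS}$-space $X$, I take the Heyting algebra of clopen upsets with $\Box:=\Box_{\leq\circ R}$ and $\Diamond:=\Diamond_R$. Condition (iii) ensures these operations are well defined. Normality is clear, and the two connection axioms are verified pointwise using the frame conditions $\mathbf{F1}$ and $\mathbf{F2}$, which hold by Lemma \ref{lem: FS spaces are FS frames}.

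On morphisms, I use the standard translation. Conditions (ii) and (iii) of an $\mathbf{FS}$-morphism amount to $f^{-1}$ commuting with $\Diamond_R$. Condition (iv), together with the p-morphism property, amounts to $f^{-1}$ commuting with $\Box_{\leq\circ R}$. Conversely, the dual of a homomorphism satisfies these conditions, again via the existence lemmas.

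Finally, the unit isomorphisms come from Esakia duality, and I only need that they respect the modal structure. On the algebra side this is exactly the representation lemmas. On the space side I must show that the relation $R$ is recovered from the induced operations. Here I use closedness of $R[x]$ and $R[{\uparrow}x]$ to separate points by clopen upsets and downsets, and condition (i) to pin down $R[x]$ from the pair ($\Box$-information, $\Diamond$-information).
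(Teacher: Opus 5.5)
The paper gives no proof of this theorem. It is quoted from Palmigiano (\cite[Theorem 6.1.11]{PalmigianoDualities2004}) and used as a black box in Lemma~\ref{lem: injective to surjective duality} and Corollary~\ref{cor: CIP implies coamalgamation}. You instead give a self-contained proof along the standard Esakia/J\'onsson--Tarski lines, with the canonical relation $P\,R\,Q$ iff $\Box^{-1}[P]\subseteq Q\subseteq\Diamond^{-1}[P]$. As far as I can check, it is correct.

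The step you flagged as delicate goes through with your two-step choice, so no Zorn argument over pairs is needed. We have $\Diamond(b_1\wedge\dots\wedge b_n)\le\Diamond b_1\wedge\dots\wedge\Diamond b_n$, and $\Box c_1\vee\Box c_2\le\Box(c_1\vee c_2)$ with $Q$ prime. So any failure of disjointness reduces to $p\wedge\Diamond b\le\Box c$ with $p\in P$, $b\in Q$, $c\notin Q$. Then $p\le\Diamond b\to\Box c\le\Box(b\to c)$, so $b\to c\in\Box^{-1}[P]\subseteq Q$, hence $c\in Q$, a contradiction.

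This second step uses only the hypothesis $\Box^{-1}[P]\subseteq Q$, which gives you two things:
\begin{enumerate}
\item It shows $R[{\uparrow}P]=\bigcap_{\Box b\in P}\varphi(b)$, which settles the closed-upset clause directly, with no compactness needed.
\item It yields the strong back condition (iv) for a dual map $h^{-1}$. Given $h^{-1}[P]\subseteq M$ and $M\,R\,Z$, take a prime $Q\supseteq\Box^{-1}[P]$ avoiding ${\downarrow}h[A\setminus Z]$. These are disjoint since $\Box b\le\Box h(a)=h(\Box a)$ and $\Box^{-1}[M]\subseteq Z$. Then lift $Q$ to $P'$.
\end{enumerate}

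One small correction: the inclusion $f^{-1}[\Box_{\leq\circ R}U]\subseteq\Box_{\leq\circ R}f^{-1}[U]$ comes from monotonicity and $R$-preservation (ii). Condition (iv) gives the reverse inclusion.

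Compared with the bare citation, your route shows which axiom does what. The two connection axioms are exactly what the two existence lemmas consume, and $\mathbf{F1}$ and $\mathbf{F2}$ are exactly what validates them on the space side. It also confirms that the definitions of $\mathbf{FS}$-space and $\mathbf{FS}$-morphism stated in the paper are the ones the duality requires, including the precise form $f(x'')\le z$ of (iv). The citation buys brevity at the cost of leaving that match to the reader.
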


Given an $\mathbf{FS}$-algebra $H$, we denote by $\mathsf{Spec}(H)$ its dual, and given homomorphisms $f\colon H\to H'$ by $f_{*}\colon \mathsf{Spec}(H')\to \mathsf{Spec}(H)$ the dual map where $f_{*}=f^{-1}[-]$. This duality restricts to Esakia duality on its $R$-free reduct, respectively, $\Box,\Diamond$-free reduct. Consequently we immediately have the following lemma, which will be the most we will need out of the dual perspective:

\begin{lemma}\label{lem: injective to surjective duality}
    A map $f\colon H\to H'$ between $\mathbf{FS}$-algebras is injective if and only if $f_{*}\colon \mathsf{Spec}(H')\to \mathsf{Spec}(H)$ is surjective.
\end{lemma}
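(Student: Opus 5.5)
The plan is to reduce the statement to the corresponding fact in Esakia duality, which the paper says the FS-duality restricts to. Let $f\colon H\to H'$ be an $\mathbf{FS}$-homomorphism. The dual map is $f_{*}=f^{-1}[-]$, which sends a prime filter of $H'$ to its preimage in $H$. This is exactly the dual map of the underlying Heyting homomorphism under Esakia duality, because the points of $\mathsf{Spec}(H)$ are the prime filters of $H$, whatever $\Box$ and $\Diamond$ do. Injectivity of $f$ is also a property of the underlying map of sets. So it suffices to prove that a Heyting homomorphism (indeed, any bounded lattice homomorphism) is injective if and only if its Priestley/Esakia dual is surjective.

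For the direction from surjectivity to injectivity, suppose $f_{*}$ is surjective and take $a\neq b$ in $H$, say $a\not\leq b$. By the prime filter theorem there is a prime filter $P$ of $H$ with $a\in P$ and $b\notin P$. Choose $Q\in\mathsf{Spec}(H')$ with $f^{-1}[Q]=P$. Then $f(a)\in Q$ and $f(b)\notin Q$, so $f(a)\neq f(b)$.

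For the converse, suppose $f$ is injective and let $P$ be a prime filter of $H$. Consider the filter $F$ of $H'$ generated by $f[P]$ and the ideal $I$ generated by $f[H\setminus P]$. I claim they are disjoint. If they met, we would have $f(p)\leq f(q_1)\vee\dots\vee f(q_n)=f(q)$ with $p\in P$ and $q=q_1\vee\dots\vee q_n\notin P$, using that $H\setminus P$ is an ideal because $P$ is prime. Since $f$ is an injective lattice homomorphism, it is an order embedding: $f(p)\leq f(q)$ gives $f(p\wedge q)=f(p)$, hence $p\wedge q=p$, so $p\leq q$ and $q\in P$, a contradiction. The prime filter theorem in $H'$ then gives a prime filter $Q\supseteq F$ disjoint from $I$. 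This $Q$ satisfies $f^{-1}[Q]=P$, so $f_{*}$ is surjective.

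I do not expect any real obstacle here. The only point needing care is that the modal structure plays no role: the correspondence between points and prime filters, and the formula for $f_{*}$, are the same as in Esakia duality. The only nontrivial ingredient is the use of the prime filter theorem, which needs choice, in both directions. Alternatively, the whole argument can be replaced by a citation of the standard Esakia duality fact that one-to-one homomorphisms correspond to onto Esakia morphisms.
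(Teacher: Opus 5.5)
Your proof is correct and follows the paper's route: the paper notes that the $\mathbf{FS}$-duality restricts to Esakia duality on the $\Box,\Diamond$-free reducts, with $f_{*}=f^{-1}[-]$ on prime filters, so the lemma is inherited from the standard fact that injective Heyting homomorphisms correspond to surjective Esakia morphisms. The only difference is that you prove that standard fact directly with the prime filter theorem instead of citing it, and both of your directions are sound.
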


\section{Craig interpolation and superamalgamation}

In this section we will show that Craig interpolation implies the amalgamation property for $\mathbf{FS}$-algebras. We make precise what we mean by this:

\begin{definition}
    We say that a super-FS logic $L$ has the \emph{Craig interpolation property} (CIP) if whenever $\phi(\overline{p},\overline{q})\rightarrow\psi(\overline{q},\overline{r})\in L$, then there is a formula $\chi(\overline{q})$ such that $\phi(\overline{p},\overline{q})\rightarrow\chi(\overline{q})\in L$ and $\chi(\overline{q})\rightarrow\psi(\overline{q},\overline{r})\in L$.
\end{definition}

\begin{definition}
    Let $\mathbf{M}$ be a variety of algebras. A \emph{V-formation} is a tuple of algebras $A, B_1, B_2\in\mathbf{M}$ and embeddings $h_i\colon A\to B_i$, as in Figure \ref{subfig:vform}. Such a variety $\mathbf{M}$ has the \emph{amalgamation property} if for every V-formation, there is an algebra $C$, and injective homomorphisms $p_i\colon B_i\to C$ such that $p_1\circ h_1 = p_2\circ h_2$, or in other words, such that the diagram in Figure \ref{subfig:amal} commutes. The lattice $C$ is called an \emph{amalgam}.

    \begin{figure}[h!]
        \centering
        \begin{subfigure}{.49\textwidth}
            \centering
            \begin{tikzcd}
                & \phantom{a} & \\
                B_1 & & B_2 \\
                & A \arrow[lu, "h_1", hook] \arrow[ru, "h_2"', hook] &                             
            \end{tikzcd}
            \caption{V-formation}
            \label{subfig:vform}
        \end{subfigure}
        \hfill
        \begin{subfigure}{.49\textwidth}
        \centering
            \begin{tikzcd}
                & C & \\
                B_1 \arrow[ru, "p_1", hook] & & B_2 \arrow[lu, "p_2"', hook] \\
                & A \arrow[lu, "h_1", hook] \arrow[ru, "h_2"', hook] &                             
            \end{tikzcd}
            \caption{Amalgam}
            \label{subfig:amal}
        \end{subfigure}
        \caption{Amalgamation of algebras}
    \end{figure}
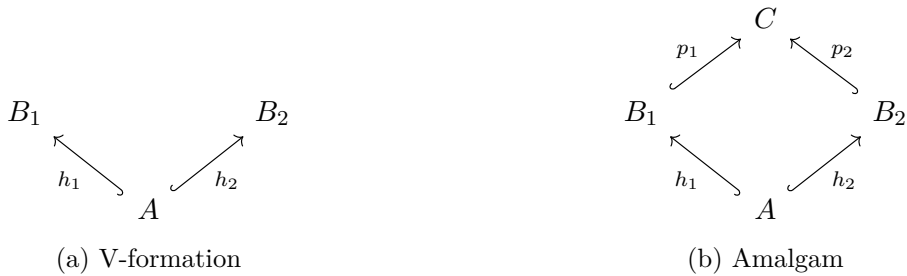
\end{definition}

To prove that Craig interpolation implies the amalgamation property, we need to establish a correspondence between filters and congruences of FS-algebras:

\begin{definition}
    A filter $F$ on an $\mathbf{FS}$-algebra is called a \emph{modal filter} if whenever $a\in F$ then $\Box a\in F$.
\end{definition}

\begin{lemma}
    Given an $\mathbf{FS}$-algebra $A$, there is a 1-1 correspondence between modal filters on $A$ and congruences.
\end{lemma}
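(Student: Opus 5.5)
The plan follows the standard Heyting-algebra argument: congruences of Heyting algebras are in bijection with filters via $F\mapsto \theta_F$, where $a\,\theta_F\, b$ iff $a\leftrightarrow b\in F$, with inverse $\theta\mapsto [1]_\theta$. Because a Heyting congruence is already determined by its $1$-class, the only new work is to show that modal filters correspond exactly to the Heyting congruences that are also compatible with $\Box$ and $\Diamond$. The two maps are then mutually inverse by the purely Heyting fact, so it remains to check two things.

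\emph{From congruences to modal filters.} Let $\theta$ be an $\mathbf{FS}$-congruence. Then $[1]_\theta$ is a filter by the Heyting case. It is modal because $a\,\theta\,1$ implies $\Box a\,\theta\,\Box 1=1$.

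\emph{From modal filters to congruences.} Let $F$ be a modal filter. We know $\theta_F$ is a Heyting congruence, so we must show it respects $\Box$ and $\Diamond$.
\begin{itemize}
\item For $\Box$: from $\Box$ preserving finite meets we get monotonicity and $\Box(a\to b)\wedge\Box a\le\Box b$. Hence $\Box(a\to b)\le \Box a\to\Box b$. So if $a\leftrightarrow b\in F$, then $\Box(a\to b),\Box(b\to a)\in F$ by modality, and therefore $\Box a\leftrightarrow \Box b\in F$.
\item For $\Diamond$: we need $\Box(a\to b)\le \Diamond a\to\Diamond b$, the algebraic form of the $\mathsf{IK}$ axiom $\Box(p\to q)\to(\Diamond p\to\Diamond q)$. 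Given this, the same argument yields $\Diamond a\leftrightarrow\Diamond b\in F$.
\end{itemize}

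The main obstacle is deriving $\Box(a\to b)\le \Diamond a\to\Diamond b$ from the Wolter-style axioms (i)--(iv) in the definition of $\mathbf{FS}$-algebra. The attempt is as follows.
\begin{enumerate}
\item By axiom (iii) with $a$ replaced by $a\to b$ and $b$ replaced by $b$, we get $\Diamond((a\to b)\to b)\le \Box(a\to b)\to\Diamond b$.
\item Since $a\le (a\to b)\to b$ and $\Diamond$ is monotone (it preserves joins), $\Diamond a\le\Diamond((a\to b)\to b)$.
\item Combining, $\Diamond a\wedge\Box(a\to b)\le\Diamond b$, which is the desired inequality.
\end{enumerate}
Should this fail, one can instead invoke the equivalence of axiomatizations cited from \cite{Wolter1999} together with Theorem~\ref{thm: algebraic completeness theorem}, which yields $\Box(p\to q)\to(\Diamond p\to\Diamond q)\in\mathsf{IK}$ and hence the identity in every $\mathbf{FS}$-algebra.

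Finally, we confirm that the two assignments are inverse. We have $[1]_{\theta_F}=\{a : a\leftrightarrow 1\in F\}=F$, and $\theta_{[1]_\theta}=\theta$ because $a\,\theta\, b$ iff $(a\leftrightarrow b)\,\theta\,1$ in any Heyting congruence. This completes the plan.
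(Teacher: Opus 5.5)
Your proof is correct and follows the same route as the paper: you restrict the standard Heyting filter--congruence bijection and use modality of $F$ together with $\Box(a\to b)\le\Box a\to\Box b$ and $\Box(a\to b)\le\Diamond a\to\Diamond b$ to get compatibility with $\Box$ and $\Diamond$. Your derivation of the second inequality from axiom (iii), by substituting $a\to b$ for $a$ and using $a\le(a\to b)\to b$ with monotonicity of $\Diamond$, is valid, so the fallback is unnecessary. It also makes explicit a step the paper leaves implicit: the paper simply cites the Simpson-style axiom rather than deriving it from the algebraic axioms it uses to define $\mathbf{FS}$-algebras.
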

\begin{proof}
    It is well-known (see e.g. \cite[Proposition 2.4.9]{Esakiach2019HeyAlg}) 
    that there is a 1-1 correspondence between filters and Heyting congruences, which is given by assigning to each filter $F$ the congruence
    \begin{equation*}
        \Theta(F)=\{(a,b) : a\leftrightarrow b\in F\},
    \end{equation*}
    and for each congruence $\Theta$ the filter
    \begin{equation*}
        F(\Theta)=\{a : (a,1)\in \Theta\}.
    \end{equation*}
We show that these assignments restrict to a bijection between modal filters and $\mathbf{FS}$-congruences. Indeed, let $F$ be a modal filter. We show that $\Theta(F)$ is a congruence. Indeed, assuming that $(a,b)\in \Theta(F)$, we have that $a\leftrightarrow b\in F$, so $a\rightarrow b\in F$, which since this is a modal filter, means that $\Box(a\rightarrow b)\in F$, and consequently using closure under $\wedge$ and upwards closure of $F$, $\Box a\rightarrow \Box b\in F$. Thus $\Box a\leftrightarrow \Box b\in F$, meaning that $(\Box a,\Box b)\in \Theta(F)$. The condition for $\Diamond$ follows similarly, since we have the axiom $\Box(a\rightarrow b)\rightarrow (\Diamond a\rightarrow \Diamond b)$.

Conversely, assuming that $\Theta$ is a congruence, if we assume that $a\in F(\Theta)$ then $(a,1)\in \Theta$, so $(\Box a,\Box 1)\in \Theta$, so by normality, $(\Box a,1)\in \Theta$, so $\Box a\in F$.
\end{proof}

We also have the following easy fact:

\begin{lemma}\label{lem: modal filter generated}
    Given an $\mathbf{FS}$-algebra $A$, and a set $S$, there is a smallest modal filter containing $S$. This is given by:
    \begin{equation*}
        \mathsf{Fil}_{\Box}(S)=\{b\in A :\exists a_{1},\dots,a_{n}\in S, \Box^{m_{1}}a_{1}\wedge\dots\wedge \Box^{m_{n}}a_{n}\leq b\},
    \end{equation*}
    where $\Box^{0}a=a$ and $\Box^{n+1}a=\Box\Box^{n}a$.
\end{lemma}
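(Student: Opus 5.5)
Write $G$ for the displayed set $\{b\in A :\exists a_{1},\dots,a_{n}\in S,\ \Box^{m_{1}}a_{1}\wedge\dots\wedge \Box^{m_{n}}a_{n}\leq b\}$. The plan is to verify two things: that $G$ is a modal filter containing $S$, and that every modal filter containing $S$ contains $G$. Then $G$ is the smallest modal filter containing $S$, and we may write $\mathsf{Fil}_{\Box}(S)=G$. We use the convention that the empty meet ($n=0$) is $1$, so that $1\in G$ even when $S=\emptyset$.

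\emph{Step 1: $G$ is a modal filter containing $S$.}
\begin{enumerate}
\item Each $a\in S$ lies in $G$: take $n=1$ and $m_1=0$.
\item $G$ is upward closed, because the defining condition only asks for $b$ to lie above some such meet.
\item $G$ is closed under $\wedge$. If $b$ is witnessed by the meet $M$ and $b'$ by the meet $M'$, then $M\wedge M'\leq b\wedge b'$, and $M\wedge M'$ is again a finite meet of the required form (concatenate the two lists).
\item $G$ is closed under $\Box$. Suppose $\Box^{m_{1}}a_{1}\wedge\dots\wedge \Box^{m_{n}}a_{n}\leq b$. Since $\Box$ is monotone (it preserves $\wedge$), $\Box\bigl(\Box^{m_{1}}a_{1}\wedge\dots\wedge \Box^{m_{n}}a_{n}\bigr)\leq\Box b$. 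Because $\Box$ preserves binary meets, the left-hand side equals $\Box^{m_{1}+1}a_{1}\wedge\dots\wedge \Box^{m_{n}+1}a_{n}$. Hence $\Box b\in G$.
\end{enumerate}
In the empty case, $\Box 1=1$ handles closure under $\Box$.

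\emph{Step 2: minimality.} Let $F$ be any modal filter with $S\subseteq F$.
\begin{enumerate}
\item By induction on $m$, using closure of $F$ under $\Box$, we get $\Box^{m}a\in F$ for every $a\in S$ and every $m$.
\item Closure of $F$ under finite meets then puts every meet $\Box^{m_{1}}a_{1}\wedge\dots\wedge \Box^{m_{n}}a_{n}$ in $F$.
\item Upward closure of $F$ gives $b\in F$ for every $b$ above such a meet. Hence $G\subseteq F$.
\end{enumerate}

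There is no real obstacle here. The only point needing care is in Step 1(iv): we must use that $\Box$ preserves finite meets, not just that it is monotone, in order to push $\Box$ inside the conjunction and recover the required form.
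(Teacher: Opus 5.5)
Your proof is correct. The paper states this lemma as an ``easy fact'' without proof, and your direct verification (the displayed set is a modal filter containing $S$, using that $\Box$ preserves finite meets, and it lies inside every such filter) is exactly the standard argument the paper leaves implicit; your empty-meet convention is also worth keeping, since without it the displayed set would be empty, hence not a filter, when $S=\emptyset$.
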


We can thus obtain the following analogue of Maksimova's theorem \cite{Maksimova1977}:

\begin{proposition}\label{prop: CIP implies amalgamation}
    For $L$ a super-FS logic, if $L$ has the Craig interpolation property, then $(\mathbf{FS}_{\mathrm{L}})_{\mathrm{alg}}$ has the amalgamation property.
\end{proposition}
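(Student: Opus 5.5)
We follow the standard Maksimova-style argument: derive amalgamation from interpolation by working with free algebras and modal filters. One subtlety shapes the plan. CIP as stated concerns the implication $\rightarrow$. Global consequence in the modal setting corresponds to filters closed under $\Box$. So we will need an interpolation statement for the modal-filter consequence relation, and this is derived from CIP as our first step.

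\textbf{Step 1 (deductive interpolation).} Suppose $\psi(\overline{q},\overline{r})$ belongs to the modal filter generated by a set $\Gamma(\overline{p},\overline{q})$ in the free algebra. By Lemma \ref{lem: modal filter generated} and Theorem \ref{thm: algebraic completeness theorem}, this means
\[
\Box^{m_1}\gamma_1\wedge\dots\wedge\Box^{m_n}\gamma_n\rightarrow\psi\in L
\]
for some $\gamma_i\in\Gamma$. The antecedent $\phi$ is a formula in $\overline{p},\overline{q}$, so CIP gives $\chi(\overline{q})$ with $\phi\rightarrow\chi\in L$ and $\chi\rightarrow\psi\in L$. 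Then $\chi$ lies in the modal filter generated by $\Gamma$, and $\psi$ lies in the modal filter generated by $\chi$. In short, the modal-filter consequence has interpolants in the common variables. The key point is that boxed premises are still formulas in the same variables, so nothing new is needed.

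\textbf{Step 2 (set-up of the V-formation).} Take a V-formation $h_i\colon A\to B_i$. Identify $A$ with its images. Choose disjoint sets of variables: $\overline{q}$ indexed by the elements of $A$, $\overline{p}$ by the elements of $B_1\setminus A$, and $\overline{r}$ by the elements of $B_2\setminus A$. This gives surjections $\pi_1\colon\mathcal{F}_L(\overline{p},\overline{q})\to B_1$ and $\pi_2\colon\mathcal{F}_L(\overline{q},\overline{r})\to B_2$, which agree on $\overline{q}$. Their kernels correspond to modal filters $F_1$ and $F_2$, by the preceding lemma on congruences. Let $F$ be the modal filter of $\mathcal{F}_L(\overline{p},\overline{q},\overline{r})$ generated by $F_1\cup F_2$, and set $C=\mathcal{F}_L(\overline{p},\overline{q},\overline{r})/F$. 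Then $C$ lies in $(\mathbf{FS}_{\mathrm{L}})_{\mathrm{alg}}$, since it is a quotient of a free algebra in a variety. The maps $p_i$ are induced by the inclusions of the free algebras. They are well defined and commute over $A$, because $\pi_1$ and $\pi_2$ send the same $q$-variable to the same element of $A$.

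\textbf{Step 3 (injectivity, the main obstacle).} We must show $p_1$ is injective, that is, $F\cap\mathcal{F}_L(\overline{p},\overline{q})=F_1$; the case of $p_2$ is symmetric. Take $\psi(\overline{p},\overline{q})\in F$. By Lemma \ref{lem: modal filter generated}, $\psi$ follows modally from some $\phi_1(\overline{p},\overline{q})\in F_1$ and $\phi_2(\overline{q},\overline{r})\in F_2$ (modal filters are closed under finite meets). So the implication
\[
\Box^{k}\phi_2\rightarrow\bigl(\Box^{k}\phi_1\rightarrow\psi\bigr)
\]
is in $L$. Here we use that $\Box^{\le k}$ of a conjunction is bounded below by iterated boxes, and we collect all boxes up to depth $k$. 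Apply Step 1 to $\Gamma=\{\phi_2\}$, whose variables are $\overline{q},\overline{r}$, and conclusion $\Box^{k}\phi_1\rightarrow\psi$, whose variables are $\overline{p},\overline{q}$; the shared variables are $\overline{q}$. This yields $\chi(\overline{q})$ with:
\begin{enumerate}
\item $\chi$ in the modal filter generated by $\phi_2$, hence $\chi\in F_2$;
\item $\Box^{j}\chi\wedge\Box^{k}\phi_1\rightarrow\psi\in L$ for some $j$.
\end{enumerate}

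By (i), $\pi_2(\chi)=1$ in $B_2$. Since $\chi$ only involves $\overline{q}$, its value is $\chi$ evaluated in $A$, pushed forward along $h_2$. As $h_2$ is injective, this value is $1$ in $A$, so $\pi_1(\chi)=1$ and $\chi\in F_1$. Hence $\Box^{j}\chi\in F_1$, and (ii) gives $\psi\in F_1$. This is exactly where embeddings (rather than arbitrary homomorphisms) are used.

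The delicate points we expect to check are twofold. First, the iterated-box bookkeeping in Step 3 must be done carefully, absorbing all the $\Box^{m}$ terms into single formulas. Second, the variable bookkeeping must be handled when $A$ is infinite, since interpolation then involves only finitely many variables at a time. This is harmless, because every membership statement in Step 3 concerns finitely many formulas.
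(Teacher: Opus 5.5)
Your proof is correct and follows essentially the paper's argument. As there, you present $B_1,B_2$ as quotients of free algebras by modal filters $F_1,F_2$ and divide the free algebra on all generators by the modal filter generated by $F_1\cup F_2$. You then obtain injectivity of $p_1$ by applying CIP to $\phi_2\rightarrow(\phi_1\rightarrow\psi)$ and transferring the interpolant $\chi(\overline{q})$ from $F_2$ to $F_1$ through $A$.

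Your Step 1 and the iterated-box bookkeeping are harmless but unnecessary. Since $F_1$ and $F_2$ are already closed under $\Box$ and $\wedge$, the generated modal filter is just $\{c : \phi_1\wedge\phi_2\leq c \text{ for some } \phi_1\in F_1,\ \phi_2\in F_2\}$, as the paper observes, so CIP applies directly. Your explicit appeal to injectivity of $h_2$ to get $\chi\in F_1$ is a slightly cleaner version of the paper's step through $A=B_1\cap B_2$ and the filter $G$.
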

\begin{proof}
    Let $(A,B_{1},B_{2})$ be a V-formation of $L$-algebras; for ease of notation, we identify $A$ with the subalgebra that is its image in $B_{1}$ and $B_{2}$, and without loss of generality suppose that that $A=B_{1}\cap B_{2}$ (since an amalgam of that V-formation would be an amalgam of the smaller one as well). We can present such algebras via quotients $q_{A}\colon \mathcal{F}_{L}(A)\to A$, $q_{B_{i}}\colon \mathcal{F}_{L}(B_{i})\to B_{i}$, the maps defined by sending the free generator $x_{b}$ for $b\in B_{i}$ to $b$ itself. We thus have that $\mathcal{F}_{L}(A)=\mathcal{F}_{L}(B_{1})\cap \mathcal{F}_{L}(B_{2})$ by our identification. Let $G=q_{A}^{-1}[1]$ be the filter-kernel, and similarly let $F_{i}=q_{B_{i}}^{-1}[1]$. Note that then $G=(F_{1}\cap \mathcal{F}_{L}(A))\cap (F_{2}\cap \mathcal{F}_{L}(A))$. Now consider $\mathcal{F}_{L}(B_{1}\cup B_{2})$, and $H=\mathsf{Fil}_{\Box}(F_{1}\cup F_{2})$. Note that because $F_{1}$ and $F_{2}$ are modal filters, from Lemma \ref{lem: modal filter generated} that
    \begin{equation*}
        H=\{c\in \mathcal{F}_{L}(B_{1}\cup B_{2}) : \exists a\in F_{1},b\in F_{2}, a\wedge b\leq c\}.
    \end{equation*}
    
Let $\iota_{i}\colon \mathcal{F}_{L}(B_{i})\to \mathcal{F}_{L}(B_{1}\cup B_{2})$ be the natural inclusion, and let $C=\mathcal{F}_{L}(B_{1}\cup B_{2})/H$ the quotient algebra. We can then define $p_{i}\colon B\to C$ by $[\iota_{i}(x_{b})]_{H}$. Note that this is well-defined: if $q_{B_{i}}(z)=q_{B_{i}}(z')$, then $z\leftrightarrow z'\in F_{i}$, and consequently by construction, $\iota_{i}(z)\leftrightarrow \iota_{i}(z')\in H$. Moreover, note that the diagram commutes by construction. We show that $p_{i}$ is injective, proving that $C$ is an amalgam.

Indeed, assume that for some $\phi(\overline{a},\overline{b}_{1})$, where $\overline{a}$ are elements of $A$ and $\overline{b}_{1}$ elements of $B_{1}$, we have $p_{1}(\phi)=1$. This means by construction that $\iota(\phi)\in H$, i.e. that there are $\psi(\overline{c}_{1},\overline{a}')\in F_{1}$ and $\mu(\overline{d}_{2},\overline{a}'')\in F_{2}$ such that $\psi\wedge \mu\leq \phi$. By basic properties of the implication, we have that then $\mu(\overline{d}_{2},\overline{a}'')\leq \psi(\overline{c}_{1},\overline{a}')\rightarrow \phi(\overline{b}_{1},\overline{a}) $. By the algebraic completeness (Theorem \ref{thm: algebraic completeness theorem}), this holds if and only if, by renaming variables as necessary:
\begin{equation*}
    \mu(\overline{p}_{2},\overline{a})\rightarrow(\psi(\overline{p}_{1},\overline{a})\rightarrow\phi(\overline{p}_{1},\overline{a}))\in L.
\end{equation*}
By Craig interpolation, let $\chi(\overline{a})$ be such that $\mu(\overline{p}_{2},\overline{a})\rightarrow\chi(\overline{a})\in L$, and $\chi(\overline{a})\rightarrow(\psi(\overline{p}_{1},\overline{a})\rightarrow\phi(\overline{p}_{1},\overline{a}))\in L$. Note that then $\chi(\overline{a})\in F_{2}$, and so by definition of $G$, $\chi(\overline{a})\in G$; by swapping some positions, $\chi(\overline{a})\rightarrow\phi(\overline{p}_{1},\overline{a})\in F_{1}$, which means since $\chi\in F_{1}$ that $\phi(\overline{p}_{1},\overline{a})\in F_{1}$, i.e., $[\phi]_{F_{1}}=1$. This shows injectivity of $p_{1}$, and the case for $p_{2}$ is entirely similar.
\end{proof}

\begin{remark}
    We note that in fact Craig interpolation implies the stronger \emph{superamalgamation property}, a property strictly stronger than amalgamation (see e.g. \cite[Section 7]{metcalfesuperinterpolation} for an extended discussion). Since our counterexamples are already on the failure of amalgamation, we did not include the proof -- following the standard arguments from Maksimova -- that CIP implies superamalgamation and that the converse likewise holds. 
\end{remark}

We end this section by corresponding the amalgamation property to a property on their dual $\mathbf{FS}$-spaces:

\begin{corollary}\label{cor: CIP implies coamalgamation}
    Let $L$ be a super-FS logic. Assume that $L$ has the Craig interpolation property. Then whenever $(X,Y_{1},Y_{2},f,g)$ is a tuple of $\mathbf{FS}_{\mathrm{L}}$-spaces such that $f\colon Y_{1}\to X$ and $g\colon Y_{2}\to X$ are surjective and continuous $\mathbf{FS}$-morphisms, then there is some $\mathbf{FS}_{\mathrm{L}}$-space $W$ and surjective and continuous $\mathbf{FS}$-morphisms $p_{1}\colon W\to Y_{1}$ and $p_{2}\colon W\to Y_{2}$ such that $fp_{1}=gp_{2}$.
\end{corollary}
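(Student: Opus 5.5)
The plan is to transfer the statement across Palmigiano's duality between $\mathbf{FS}_{\mathrm{alg}}$ and $\mathbf{FS}_{\mathrm{sp}}$, use Proposition \ref{prop: CIP implies amalgamation} on the algebraic side, and transfer back.

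First we dualize the given data. Let $A$, $B_1$, $B_2$ be the $\mathbf{FS}$-algebras of clopen upsets dual to $X$, $Y_1$, $Y_2$. Since the duality is an equivalence, the continuous $\mathbf{FS}$-morphisms $f$ and $g$ correspond to $\mathbf{FS}$-homomorphisms $h_1\colon A\to B_1$ and $h_2\colon A\to B_2$, namely $h_1=f^{-1}[-]$ and $h_2=g^{-1}[-]$. Under this correspondence $(h_1)_*\cong f$ and $(h_2)_*\cong g$.

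Two points need checking here.

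\textbf{The dual algebras lie in $(\mathbf{FS}_{\mathrm{L}})_{\mathrm{alg}}$.} I read ``$\mathbf{FS}_{\mathrm{L}}$-space'' as meaning an $\mathbf{FS}$-space whose dual algebra validates $L$. If the paper intends a different definition, this is precisely the identification needed, and I will take it as the definition.

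\textbf{The maps $h_1,h_2$ are embeddings.} By Lemma \ref{lem: injective to surjective duality}, $h_i$ is injective because $(h_i)_*$ is surjective. So $(A,B_1,B_2,h_1,h_2)$ is a V-formation in $(\mathbf{FS}_{\mathrm{L}})_{\mathrm{alg}}$.

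Next we amalgamate. Since $L$ has CIP, Proposition \ref{prop: CIP implies amalgamation} gives an algebra $C\in(\mathbf{FS}_{\mathrm{L}})_{\mathrm{alg}}$ and injective homomorphisms $p_i'\colon B_i\to C$ with $p_1'h_1=p_2'h_2$.

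Finally we dualize back. Set $W=\mathsf{Spec}(C)$; this is an $\mathbf{FS}_{\mathrm{L}}$-space by the same identification as before. Define $p_i=(p_i')_*\colon W\to \mathsf{Spec}(B_i)\cong Y_i$. These are continuous $\mathbf{FS}$-morphisms, and they are surjective by Lemma \ref{lem: injective to surjective duality} because each $p_i'$ is injective. Since $(-)_*$ is a contravariant functor,
\[
f p_1=(h_1)_*(p_1')_*=(p_1'h_1)_*=(p_2'h_2)_*=(h_2)_*(p_2')_*=g p_2 .
\]

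The only step that needs any care is transporting along the isomorphisms $Y_i\cong\mathsf{Spec}(B_i)$ and $X\cong \mathsf{Spec}(A)$, which come from the unit of the equivalence. Composing with these isomorphisms preserves surjectivity and preserves commutativity of the square, so the resulting $W$, $p_1$, $p_2$ satisfy the statement. Nothing else here is an obstacle: everything is formal once the duality and Lemma \ref{lem: injective to surjective duality} are in hand.
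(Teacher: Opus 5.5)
Your proposal is correct and takes the same route as the paper, whose entire proof is the remark that the corollary follows from Proposition \ref{prop: CIP implies amalgamation} and Lemma \ref{lem: injective to surjective duality}. You have simply unpacked that remark: you dualize the co-V-formation and use the lemma to get injectivity, amalgamate, and dualize back using surjectivity and contravariant functoriality to obtain $fp_1=gp_2$. Your reading of ``$\mathbf{FS}_{\mathrm{L}}$-space'' as an $\mathbf{FS}$-space whose dual algebra validates $L$ is the implicit definition the paper relies on.
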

\begin{proof}
    This follows immediately from Proposition \ref{prop: CIP implies amalgamation} and Lemma \ref{lem: injective to surjective duality}.
\end{proof}

We call a quintuple $(X,Y_{1},Y_{2},f,g)$ a \emph{co-V-formation}, and the space $W$ the \emph{co-amalgam}. The property of existence of co-amalgams for co-V-formations will thus be referred to as co-amalgamation.

\section{Failure of interpolation for $\mathsf{IK}$}

We now move on to introducing our counterexample to coamalgamation for the logic $\mathsf{IK}$. The three frames $X, Y, Z$ are denoted in Figure \ref{fig:coamalgamationfailure}. Explicitly, the blue arrows define the $\leq$-relations (with reflexive arrows omitted),  whilst the red arrows denote the $R$-relation. The maps $f$ and $g$ are defined as follows:
\begin{enumerate}
    \item $f(x_{i})=g(y_{i})=z_{i}$ for $i\in \{1,2\}$.
    \item $f(a_{0})=k_{0}=g(b_{1})=g(b_{0})$ and $g(b_{2})=k_{2}=f(a_{1})=f(a_{2})$.
    \item $f(a_{3})=k_{3}$ and $f(a_{4})=k_{4}$ and $g(b_{4})=k_{4}$ and $g(b_{3})=k_{3}$.
\end{enumerate}

\begin{figure}
    \centering
\begin{tikzpicture}
    \node (x0) at (0,0) {$x_{0}$};
    \node (x1) at (3,0) {$x_{1}$};
    \node (x2) at (0,3) {$x_{2}$};
    \node (a0) at (1.5,4.5) {$a_{0}$};
    \node (a1) at (0,4.5) {$a_{1}$};
    \node (a2) at (4.5,1) {$a_{2}$};
    \node (a3) at (1,2) {$a_{3}$};
    \node (a4) at (2,1) {$a_{4}$};
    \node at (2,-1) {$X$};

    \draw[->,red,thick] (x0) -- (x1);
    \draw[->,blue,thick] (x1) -- (a0);
    \draw[->,blue,thick] (x0) -- (x2);
    \draw[->,red,thick] (x2) -- (a0);
    \draw[->,red,thick] (x2) -- (a1);
    \draw[->,red,thick] (x2) -- (a1);
    \draw [->,blue,thick] (x1) -- (a2);
    \draw [->,blue,thick] (x0) -- (a3);
    \draw [->,blue,thick] (x0) -- (a4);
    \draw[->,red,thick] (a3) -- (a0);
    \draw[->,red,thick] (a4) -- (a2);

    \node (y0) at (8,0) {$y_{0}$};
    \node (y1) at (11,0) {$y_{1}$};
    \node (y2) at (8,3) {$y_{2}$};
    \node (b0) at (9.5,4.5) {$b_{2}$};
    \node (b1) at (8,4.5) {$b_{1}$};
    \node (b2) at (12.5,1) {$b_{0}$};
    \node (b3) at (9,2) {$b_{4}$};
    \node (b4) at (10,1) {$b_{3}$};
    \node at (10,-1) {$Y$};

    \draw [->,red,thick] (y0) -- (y1);
    \draw [->,blue,thick] (y0) -- (y2);
    \draw [->,red,thick] (y2) -- (b0);
    \draw [->,blue,thick] (y1) -- (b0);
    \draw [->,red,thick] (y2) -- (b1);
    \draw [->,blue,thick] (y1) -- (b2);
    \draw [->,blue,thick] (y0) -- (b3);
    \draw[->,red,thick] (b3) -- (b0);
    \draw [->,blue,thick] (y0) -- (b4);
    \draw[->,red,thick] (b4) -- (b2);

    \node (z2) at (4,-3) {$z_{2}$};
    \node (z0) at (4,-6) {$z_{0}$};
    \node (z1) at (7,-6) {$z_{1}$};
    \node (k0) at (5.5,-1.5) {$k_{0}$};
    \node (k2) at (8.5,-4.5) {$k_{2}$};
    \node (k3) at (5,-4) {$k_{3}$};
    \node (k4) at (6,-5) {$k_{4}$};
    \node at (6,-7) {$Z$};

    \draw[->, blue,thick] (z0) -- (z2);
    \draw[->,red,thick] (z0) -- (z1);
    \draw[->,blue,thick] (z1) -- (k0);
    \draw[->,red,thick] (z2) -- (k0);
    \draw[->,blue,thick] (z1) -- (k2);
    \draw[->,red,thick] (z2) -- (k2);
    \draw[->,blue,thick] (z0) -- (k3);
    \draw[->,blue,thick] (z0) -- (k4);
    \draw[->,red,thick] (k3) -- (k0);
    \draw[->,red,thick] (k4) -- (k2);

\end{tikzpicture}
    \caption{The co-V formation $(Z,X,Y)$ }
    \label{fig:coamalgamationfailure}
\end{figure}
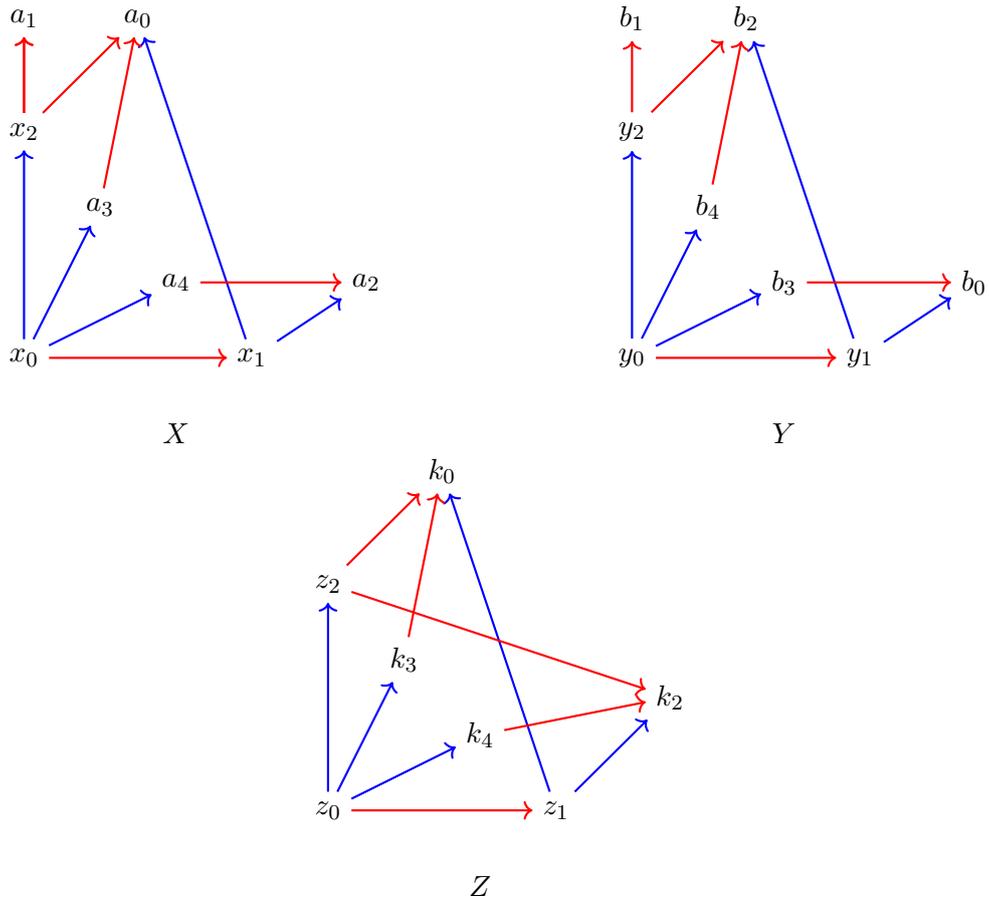

\vspace{2mm}

We will first verify that (1) $X,Y,Z$ are $\mathbf{FS}$-spaces, and  (2) second we will verify that the maps $f,g$ are surjective $\mathbf{FS}$-morphisms. We start with the former:

\begin{lemma}\label{lem: All the frames are IK}
    The frames $X,Y,Z$ are $\mathbf{FS}$-spaces.
\end{lemma}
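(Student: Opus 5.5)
By Lemma \ref{lem: FS spaces are FS frames}\eqref{eq: finite part}, and since $X$, $Y$, $Z$ are finite (hence Esakia spaces with the discrete topology), it suffices to check two things for each frame. First, the identity $R[x]=R[{\uparrow}x]\cap{\downarrow}R[x]$ must hold for every point $x$. Second, the confluence conditions \textbf{F1} and \textbf{F2} must hold. I would first read off the relations explicitly from Figure \ref{fig:coamalgamationfailure}. For $X$:
\begin{itemize}
\item $\leq$ is the reflexive closure of $x_1\leq a_0$, $x_1\leq a_2$, $x_0\leq x_2$, $x_0\leq a_3$, $x_0\leq a_4$. This is already transitive, since none of $x_2,a_0,a_2,a_3,a_4$ has a strict successor.
\item $R=\{(x_0,x_1),(x_2,a_0),(x_2,a_1),(a_3,a_0),(a_4,a_2)\}$.
\end{itemize}
Next I would observe that $Y$ is obtained from $X$ by the relabelling $x_i\mapsto y_i$, $a_0\mapsto b_2$, $a_1\mapsto b_1$, $a_2\mapsto b_0$, $a_3\mapsto b_4$, $a_4\mapsto b_3$ (this is exactly how the nodes are placed in the picture). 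So $Y\cong X$ and only $X$ and $Z$ need checking. The frame $Z$ has the same shape with $a_1$ deleted and the edge $x_2Ra_0$ doubled into $z_2Rk_0$, $z_2Rk_2$, so the verification for $Z$ is a near copy of that for $X$.

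The key simplification is that every point other than $x_0$ and $x_1$ is $\leq$-maximal. For a maximal point $x$ we have ${\uparrow}x=\{x\}$, so the identity $R[x]=R[{\uparrow}x]\cap{\downarrow}R[x]$ is trivial. Likewise \textbf{F1} is trivial for $R$-edges starting at a maximal point, and \textbf{F2} is trivial for $R$-edges ending at one. This leaves very few cases.
\begin{itemize}
\item For $x_1$: $R[{\uparrow}x_1]=\emptyset$, since none of $x_1,a_0,a_2$ has $R$-successors.
\item For $x_0$: $R[{\uparrow}x_0]=\{x_1,a_0,a_1,a_2\}$ and ${\downarrow}R[x_0]={\downarrow}x_1=\{x_1\}$, so the intersection is $\{x_1\}=R[x_0]$.
\item \textbf{F1} for the edge $x_0Rx_1$: the successors $x_2,a_3,a_4$ of $x_0$ have $R$-successors $a_0$, $a_0$, $a_2$ respectively, and each lies above $x_1$.
\item \textbf{F2} for the edge $x_0Rx_1$: the points above $x_1$ are $a_0$, reached via $x_0\leq a_3Ra_0$ (or via $x_2$), and $a_2$, reached via $x_0\leq a_4Ra_2$.
\end{itemize}
The same four checks with the $k$'s work for $Z$, using $z_2Rk_0$, $k_3Rk_0$ and $k_4Rk_2$.

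The only real obstacle is reading the figure correctly, in particular the order and arrow directions and the crossed labels in $Y$. A misreading would break either the transitivity of $\leq$ or condition (i). So I would first tabulate $\leq$, $R$ and ${\uparrow}x$ for each point, and only then run the finite check.
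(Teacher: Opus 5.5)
Your proposal is correct and follows essentially the paper's route. You use the finite case of Lemma~\ref{lem: FS spaces are FS frames} to reduce to the identity $R[x]=R[{\uparrow}x]\cap{\downarrow}R[x]$ together with \textbf{F1}/\textbf{F2}, and note that only the single $R$-edge between the two non-maximal points ($x_0Rx_1$ for you, $z_0Rz_1$ in the paper) needs checking. The remaining differences are cosmetic: you verify the identity by direct computation at $x_0,x_1$ rather than the paper's argument that any chain $x\leq yRz\leq t$ contains a reflexive step, and you make the relabelling $Y\cong X$ explicit, whereas the paper checks $Z$ in detail and calls $X,Y$ analogous.
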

\begin{proof}
By Lemma \ref{lem: FS spaces are FS frames}, it suffices to show that $X,Y,Z$ are $\mathbf{FS}$-frames and satisfy the condition $R[x]=R[{\uparrow}x]\cap {\downarrow}R[x]$. Note that $R[x]\subseteq R[{\uparrow}x]\cap {\downarrow}R[x]$ always. For the other direction, we need to show that whenever $z$ is a point, such that there exist $x\leq y$ where $yRz$ (so $z\in R[{\uparrow}x]$) and also $z\leq t$ where $xRt$ (so $z\in {\downarrow}R[x]$), then $xRz$. Note that in order for this to hold, there must be a succession of relations, $x\leq yRz\leq t$. In all of our diagrams, such a sequence of relations can only happen if one of the $\leq$ is reflexive, i.e., either $x=y$ or $z=t$; in the former case, because $yRz$, then $xRz$; in the latter case, by assumption, $xRt$ so $xRz$. This shows the condition.

We make such a verification for $Z$, since the cases for $X$ and $Y$ -- which are identical -- involve only one more arrow, which as it will follow from the arguments, never plays any role. We note that these are certainly posets and relational structures, so we essentially need to show that they satisfy the confluence relations $\mathbf{F1}$ and $\mathbf{F2}$. For this, we need to show that for any triple $(a,b,c)$ of points in the shape of $\mathbf{F1}$ or $\mathbf{F2}$ we can complete the squares. which we call \emph{closing the square}.

    We first consider the case of $\mathbf{F1}$. Note that certainly if $w$ is a node such that $w$ is $\leq$-maximal, then it will always close off any such diagram by taking $w$ itself. On the other hand, if $w$ is not $\leq$-maximal, but has no $R$-successors, then no such square can arise. So the only node $w$ which has proper $\leq$-successors and $R$-successors is $z_{0}$. Also note that its only $R$-successor is $z_{1}$, so we consider all the options for $z_{0}\leq a$ whilst $z_{0}Rz_{1}$.
    \begin{enumerate}
        \item If we take $z_{0}\leq z_{2}$ or $z_{0}\leq k_{3}$, then $z_{2}Rk_{0}$ (respectively, $k_{3}Rk_{0}$) and $z_{1}\leq k_{0}$. 
        \item If we take $z_{0}\leq k_{4}$, then $k_{2}$ closes off the diagram.
    \end{enumerate}
    This shows that $X$ satisfies the confluence axiom $\mathbf{F1}$. Now for $\mathbf{F2}$, note that if $w$ is a node, and $wRk$, but $k$ is $\leq$-maximal, then no diagram needs to be closed off. Thus $z_{2},k_{3},k_{4},k_{0},k_{2}$ do not need to be closed off. Also if a node has no $R$-successors, again, no closure needs to happen. So $z_{1}$ does not need any closure. So again we are left just with $z_{0}$. This only has $z_{1}$ as its successor.
    \begin{enumerate}
        \item If we take $z_{0}Rz_{1}$, and $z_{1}\leq k_{2}$, then $k_{4}$ closes off the diagram.
        \item If we take $z_{0}Rz_{1}$ and $z_{1}\leq k_{0}$ then $k_{3}$ closes off the diagram.
    \end{enumerate}
    We thus have that all diagrams are closed off, showing that $Z$ (and likewise, $X,Y$) are Fischer-Servi frames.
\end{proof}

We likewise show the following:
\begin{lemma}\label{lem: fs morphisms}
    The map $f\colon X\to Z$ is an $\mathbf{FS}$-morphism
\end{lemma}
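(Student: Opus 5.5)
Since $X$ and $Z$ are finite, I would prove the statement by checking the four clauses in the definition of an $\mathbf{FS}$-morphism directly, one clause at a time, by case analysis on the finitely many relevant configurations.

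\textbf{Setup.} First I fix $f$ completely. The listed clauses give $f(x_i)=z_i$ for $i\in\{1,2\}$, $f(a_0)=k_0$, $f(a_1)=f(a_2)=k_2$, $f(a_3)=k_3$ and $f(a_4)=k_4$. I take $f(x_0)=z_0$, which is implicit in the paper, and $f$ is then surjective. Reading off Figure \ref{fig:coamalgamationfailure}, the relations are as follows.
\begin{itemize}
\item In $X$: $x_0\leq x_2,a_3,a_4$ and $x_1\leq a_0,a_2$, and $x_0Rx_1$, $x_2Ra_0$, $x_2Ra_1$, $a_3Ra_0$, $a_4Ra_2$.
\item In $Z$: $z_0\leq z_2,k_3,k_4$ and $z_1\leq k_0,k_2$, and $z_0Rz_1$, $z_2Rk_0$, $z_2Rk_2$, $k_3Rk_0$, $k_4Rk_2$.
\end{itemize}

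\textbf{Clauses (i) and (ii).} Both go edge by edge.
\begin{itemize}
\item Order preservation holds because each proper $\leq$-edge of $X$ maps onto a proper $\leq$-edge of $Z$; for example $x_1\leq a_2$ goes to $z_1\leq k_2$.
\item For the $\leq$-back condition, only $z_0$ and $z_1$ have proper upper bounds. The successors $z_2,k_3,k_4$ of $z_0=f(x_0)$ are the images of $x_2,a_3,a_4$. The successors $k_0,k_2$ of $z_1=f(x_1)$ are the images of $a_0,a_2$. The remaining points of $X$ are maximal and map to maximal points.
\item For (ii), each $R$-edge of $X$ maps to an $R$-edge of $Z$. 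The only edges where this needs a check are the ones where two points share an image: $x_2Ra_1$ goes to $z_2Rk_2$, and $a_4Ra_2$ goes to $k_4Rk_2$.
\end{itemize}

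\textbf{Clause (iii), the weak back condition.} Here I go through the $R$-edges of $Z$ and, for every preimage of the source, exhibit an $R$-successor mapping to something at or above the target. Each source has a unique preimage, so the witnesses are:
\begin{itemize}
\item $z_0Rz_1$ is witnessed by $x_0Rx_1$;
\item $z_2Rk_0$ and $z_2Rk_2$ are witnessed by $x_2Ra_0$ and $x_2Ra_1$;
\item $k_3Rk_0$ is witnessed by $a_3Ra_0$;
\item $k_4Rk_2$ is witnessed by $a_4Ra_2$.
\end{itemize}
In every case the image equals the target exactly.

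\textbf{Clause (iv), the strong back condition.} This clause needs the most cases. Given $f(x)\leq m$ and $mRz$, I would split according to whether $m=f(x)$.
\begin{itemize}
\item If $m=f(x)$, take $x'=x$. Clause (iii) then supplies $x''$ with $xRx''$, and since the witnesses above are exact, $f(x'')=z$, which gives $f(x'')\leq z$.
\item If $m>f(x)$, then either $x=x_0$ with $m\in\{z_2,k_3,k_4\}$, or $x=x_1$ with $m\in\{k_0,k_2\}$. In the second case $m$ has no $R$-successors, so there is nothing to check. In the first case the chains $x_0\leq x_2Ra_0$, $x_0\leq x_2Ra_1$, $x_0\leq a_3Ra_0$ and $x_0\leq a_4Ra_2$ handle the pairs $(z_2,k_0)$, $(z_2,k_2)$, $(k_3,k_0)$ and $(k_4,k_2)$ respectively.
\end{itemize}

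The main obstacle is only bookkeeping. I have to make sure the relations are read correctly from the figure, and that the identifications $f(a_1)=f(a_2)=k_2$ do not break clause (ii) or (iii). The argument for clause (iv) shows why the extra point $a_1$ is needed: it supplies the witness for the pair $(z_2,k_2)$.
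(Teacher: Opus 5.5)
Your proposal is correct and follows the same route as the paper: a direct finite check of the four clauses (p-morphism, $R$-preservation, weak back, strong back), using the witnesses $x_0Rx_1$, $x_2Ra_0$, $x_2Ra_1$, $a_3Ra_0$, $a_4Ra_2$ and the chains through $x_2,a_3,a_4$ above $x_0$. Your write-up is, if anything, more careful than the paper's in two places: you read the figure correctly (the $R$-successors of $x_2$ are $a_0,a_1$, whereas the paper's text says $a_0,a_2$), and you handle the case $m=f(x)$ in clause (iv) explicitly, using the fact that the weak-back witnesses are exact.
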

\begin{proof}
First we show that it is a $\leq$-p-morphism. This is clear to observe, since the only identification made by $f$ is sending $a_{1}$ to $k_{2}$, which is a maximal node. Similarly for $g$. Next, note that $f$ preserves the relation:
\begin{enumerate}
\item We have $x_{2}Ra_{1}$ and $f(x_{2})=z_{2}Rk_{2}=f(a_{1})$; similarly $x_{2}Ra_{0}$ and $x_{2}Ra_{2}$. Similarly for $g$.
\item We have $a_{3}Ra_{0}$ and $f(a_{3})=k_{3}Rk_{0}=f(a_{0})$. Similarly for $g$, except note that the swap between the indices relative to $f$ is done in a consistent way.
\item Finally $x_{0}Rx_{1}$ and certainly $f(x_{0})=z_{0}Rz_{1}=f(x_{1})$.
\end{enumerate}

Next we show that it satisfies the weak back condition.
\begin{enumerate}
\item We have that $f(a_{3})Rk_{0}$, but this is obviously witnessed by $a_{0}$.
\item We have that $f(x_{2})Rk_{2}$, and $x_{2}Ra_{1}$, where $f(a_{1})=k_{2}$. Similarly for $g$ with the appropriate modifications.
\item We have that $f(x_{0})Rz_{1}$, but certainly $x_{1}$ witnesses this. For $g$ the same holds.
\end{enumerate}

We now show that it satisfies the strong back condition.
\begin{enumerate}
\item If we consider $f(x_{1}),f(a_{1}),f(a_{0}),f(a_{4}),f(a_{2})$, note that these don't have any proper $\leq$-successor, so the condition can always be instantiated.
\item For $f(x_{1})$, note that this is below $a_{0}$ and $a_{2}$, but these do not have any $R$-successors. So there is no need to close off the strong back condition.
\item Finally we consider $f(x_{0})$. This is below $x_{2},a_{3},a_{4}$. First note that $f(x_{0})\leq z_{2}$, and $z_{2}Rk_{0}$. Then by taking $x_{2}$, we have that $x_{0}\leq x_{2}$, $x_{2}Ra_{0}$, and $f(a_{0})=k_{0}$. Similarly for each of the other nodes, and similarly for $g$, with appropriate modifications.
\end{enumerate}
We thus obtain that $f$ is a $\mathbf{FS}$-morphism, as desired.
\end{proof}

\begin{theorem}\label{thm: failure of coamalgamation}
    The class of $\mathbf{FS}$-spaces is not closed under co-amalgamation.
\end{theorem}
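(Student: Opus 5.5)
The plan is to argue by contradiction. Suppose there is an $\mathbf{FS}$-space $W$ with surjective continuous $\mathbf{FS}$-morphisms $p_1\colon W\to X$ and $p_2\colon W\to Y$ such that $fp_1=gp_2$. By Lemma~\ref{lem: FS spaces are FS frames}(ii), $W$ is an $\mathbf{FS}$-frame, so $\mathbf{F1}$ holds in $W$; that is the only use I will make of $W$ being a space. The approach is to follow a single point of $W$ lying over $x_0$, generate its relevant successors from the back conditions of $p_1$, and then use $\mathbf{F1}$ in $W$ to produce a point whose two images have incompatible images in $Z$.

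First I record how the fibres match up. Since $f^{-1}(z_0)=\{x_0\}$ and $g^{-1}(z_0)=\{y_0\}$, any $w$ with $p_1(w)=x_0$ has $p_2(w)=y_0$. The same holds for $z_1$ and $z_2$: lying over $x_i$ forces lying over $y_i$. I also use throughout that the $p_i$ preserve $\leq$ (being p-morphisms) and preserve $R$.

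The chase goes as follows. Pick $w\in W$ with $p_1(w)=x_0$, which exists by surjectivity.
\begin{enumerate}
\item By the weak back condition, since $x_0Rx_1$ there is $v$ with $wRv$. Preservation of $R$ gives $x_0Rp_1(v)$, so $p_1(v)=x_1$ and hence $p_2(v)=y_1$.
\item By the strong back condition applied to $x_0\le x_2$ and $x_2Ra_1$, there are $w\le w'$ and $w'Rw''$ with $p_1(w'')\le a_1$. Since $a_1$ is maximal, $p_1(w'')=a_1$.
\item The point $p_1(w')$ lies above $x_0$ and has an $R$-successor equal to $a_1$, so $p_1(w')=x_2$. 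Hence $p_2(w')=y_2$.
\item Apply $\mathbf{F1}$ in $W$ to $w\le w'$ and $wRv$. This gives $v'$ with $w'Rv'$ and $v\le v'$.
\end{enumerate}

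Now I compute the images of $v'$. On the $X$ side, $p_1(v')$ is an $R$-successor of $x_2$ lying above $x_1$. The $R$-successors of $x_2$ are $a_0$ and $a_1$, and only $a_0$ lies above $x_1$, so $p_1(v')=a_0$ and $fp_1(v')=k_0$. On the $Y$ side, $p_2(v')$ is an $R$-successor of $y_2$ lying above $y_1$. The $R$-successors of $y_2$ are the node $g$ sends to $k_2$ and $b_1$, and $b_1$ is not above $y_1$. So $p_2(v')$ is the former, and $gp_2(v')=k_2$. This contradicts $fp_1=gp_2$.

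The main obstacle is the careful bookkeeping against Figure~\ref{fig:coamalgamationfailure}, in particular the swapped labels in $Y$. The whole argument rests on a single asymmetry. In $X$, the $R$-successor of $x_2$ lying above $x_1$ is sent to $k_0$; in $Y$, the $R$-successor of $y_2$ lying above $y_1$ is sent to $k_2$. Equivalently, $f$ and $g$ disagree on which successor of $z_2$ is "above $z_1$". I would verify each of these incidences explicitly from the figure, since any mislabelled node would break the contradiction.
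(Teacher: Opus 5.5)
Your argument is correct and is essentially the paper's proof. You take a point over $x_0$, force its $R$-successor to lie over $x_1$, find an upper point over $x_2$, close the $\mathbf{F1}$ square in $W$, and get the clash $k_0\neq k_2$ at the new point. The one difference is that you reach $x_2$ through the strong back condition, whereas the paper simply applies the $\le$-back (p-morphism) condition of $p_1$ to $x_0\le x_2$, which is shorter.

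One justification in your step 2 is wrong. From $p_1(w'')\le a_1$ you get $p_1(w'')=a_1$ because nothing lies strictly below $a_1$ in $X$ ($a_1$ is $\le$-isolated), not because $a_1$ is maximal. The conclusion still holds, and using the $\le$-back condition removes this step altogether.
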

\begin{proof}
    By Lemma \ref{lem: All the frames are IK}, $Z,X,Y$ are all $\mathbf{FS}$-spaces, and by Lemma \ref{lem: fs morphisms}, the maps are $\mathbf{FS}$-morphisms. Assume towards a contradiction that $W$ is a coamalgam of $(Z,X,Y)$, and let $p_{1}\colon W\to X$ and $p_{2}\colon W\to Y$ be surjective $\mathbf{FS}$-morphisms; note that $W$ is an $\mathbf{FS}$-frame by Lemma \ref{lem: FS spaces are FS frames}.\eqref{eq: infinite part}, and hence, satisfies conditions $\mathbf{F1}$ and $\mathbf{F2}$. Let $w_{0}\in W$ be such that $p_{1}(w_{0})=x_{0}$. By the $\leq$-p-morphism condition, then there is some world $w_{0}\leq w_{2}$ such that $p_{1}(w_{2})=x_{2}$. By the $R$-weak back condition, there is some $w_{1}$ such that $w_{0}Rw_{1}$ and $p_{1}(w_{1})\geq x_{1}$. Since the map preserves the $R$-relation, $p_{1}(w_{1})=a_{0}$ or $p_{1}(w_{1})=a_{2}$, would yield  $x_{0}Ra_{0}$ or $x_{0}Ra_{2}$, a contradiction. Thus,  $p_{1}(w_{1})=x_{1}$. Since the diagram commutes, we have that $p_{2}(w_{0})=y_{0}$, $p_{2}(w_{2})=y_{2}$ and $p_{2}(w_{1})=y_{1}$.

    Since $W$ is an $\mathbf{FS}$-frame, there is  $i\in W$  such that $w_{2}Ri$ and $w_{1}\leq i$. Then, as $p_{1}$ preserves both relations, we must have $p_{1}(i)=a_{0}$ and so $fp_{1}(i)=k_{0}$, by construction. On the other hand, for the same reason $p_{2}(i)=b_{2}$ and so $gp_{2}(i)=k_{2}$ -- a contradiction, since we assume the diagram commutes. 
\end{proof}

\begin{theorem}\label{thm:FS_no_amal}
    The Fischer-Sevi logic $\mathsf{IK}$ does not have the Craig interpolation property.
\end{theorem}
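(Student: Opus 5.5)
The plan is to argue by contraposition through Corollary \ref{cor: CIP implies coamalgamation}, using the co-V-formation $(Z,X,Y,f,g)$ of Figure \ref{fig:coamalgamationfailure}. Suppose, towards a contradiction, that $\mathsf{IK}$ has the Craig interpolation property. Since $\mathsf{IK}$ is the least super-FS logic, every $\mathbf{FS}$-algebra validates $\mathsf{IK}$. Hence $(\mathbf{FS}_{\mathsf{IK}})_{\mathrm{alg}}$ is just $\mathbf{FS}_{\mathrm{alg}}$, and by Palmigiano duality the corresponding spaces are all $\mathbf{FS}$-spaces. So the Corollary applies to every co-V-formation of $\mathbf{FS}$-spaces with surjective continuous $\mathbf{FS}$-morphisms.

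The next step is to check the hypotheses for $(Z,X,Y,f,g)$.
\begin{enumerate}
\item By Lemma \ref{lem: All the frames are IK}, $X$, $Y$ and $Z$ are $\mathbf{FS}$-spaces. They are finite, so they carry the discrete topology.
\item By Lemma \ref{lem: fs morphisms} (and the symmetric argument for $g$, which that lemma's proof indicates), $f$ and $g$ are $\mathbf{FS}$-morphisms.
\item Continuity is automatic because the topologies are discrete.
\item Surjectivity is read off the explicit definitions. Every $z_i$ and every $k_j$ ($j\in\{0,2,3,4\}$) is hit by both $f$ and $g$. The remaining point $z_0$ of $Z$ should be hit by $x_0$ and $y_0$, which I take to be intended: $f(x_0)=g(y_0)=z_0$, in line with the proof of Theorem \ref{thm: failure of coamalgamation}.
\end{enumerate}
With these hypotheses in place, the Corollary yields an $\mathbf{FS}$-space $W$ and surjective continuous $\mathbf{FS}$-morphisms $p_1\colon W\to X$ and $p_2\colon W\to Y$ with $fp_1=gp_2$.

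Such a $W$ cannot exist, and this is exactly the content of Theorem \ref{thm: failure of coamalgamation}. Its proof derives a contradiction from any co-amalgam, using only that $W$ is an $\mathbf{FS}$-frame (Lemma \ref{lem: FS spaces are FS frames}\eqref{eq: infinite part}) and the morphism conditions on $p_1$ and $p_2$. So the assumption that $\mathsf{IK}$ has CIP is refuted.

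The main obstacle is bookkeeping rather than any new idea. One must confirm that the class of $\mathbf{FS}_{\mathsf{IK}}$-spaces in the Corollary coincides with all $\mathbf{FS}$-spaces. One must also confirm that the contradiction in Theorem \ref{thm: failure of coamalgamation} uses nothing beyond the properties the Corollary guarantees. In particular, the argument invokes the $\leq$-p-morphism and $R$-preservation properties of $p_1,p_2$ and the conditions $\mathbf{F1}$ and $\mathbf{F2}$ in $W$, and these are all available for infinite $W$ as well. Finally, if surjectivity of $f$ and $g$ at $z_0$ is not explicitly stated, I would make it explicit by setting $f(x_0)=g(y_0)=z_0$.
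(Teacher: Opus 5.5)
Your proposal is correct and matches the paper's proof, which simply combines Corollary~\ref{cor: CIP implies coamalgamation} with Theorem~\ref{thm: failure of coamalgamation}. Your extra bookkeeping makes explicit what the paper leaves implicit: that $(\mathbf{FS}_{\mathsf{IK}})_{\mathrm{alg}}$ is all of $\mathbf{FS}_{\mathrm{alg}}$, that continuity is automatic by discreteness, and that $f,g$ are surjective. Setting $f(x_0)=g(y_0)=z_0$ fills a real omission in the paper's stated definition of $f$ and $g$, and it is exactly what the proof of Theorem~\ref{thm: failure of coamalgamation} presupposes when it concludes $p_2(w_0)=y_0$.
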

\begin{proof}
    The result follows immediately from Theorem~\ref{thm: failure of coamalgamation} and   
    Corollary~\ref{cor: CIP implies coamalgamation}.
\end{proof}

\begin{remark}
We make some remarks on related interpolation properties:
\begin{enumerate}
    \item We note that $\mathbf{FS}$-algebras are easily seen to have the Congruence Extension Property (CEP). By a classic result (see e.g. \cite[Theorem 27]{metcalfesuperinterpolation}) if $\mathbf{K}$ has the CEP, then it has amalgamation if and only if it has \emph{deductive interpolation}. Hence we, in fact, conclude that $\mathsf{IK}$ also fails to have deductive interpolation.
    \item Since uniform interpolation and Lyndon interpolation are both strengthenings of deductive or Craig interpolation, those properties will likewise fail to hold for $\mathsf{IK}$.
\end{enumerate}
\end{remark}

We now consider some related systems:
\begin{itemize}
    \item $\mathsf{IKT}\coloneqq \mathsf{IK}\oplus(\Box p\rightarrow p)\oplus (p\rightarrow \Diamond p)$.
    \item $\mathsf{IK4}=\mathsf{IK}\oplus (\Box p\rightarrow\Box\Box p)\oplus (\Diamond\Diamond p\rightarrow\Diamond p)$.
    \item $\mathsf{IS4}=\mathsf{IKT}\oplus (\Box p\rightarrow\Box\Box p)\oplus (\Diamond\Diamond p\rightarrow\Diamond p)=\mathsf{IK4}\oplus (\Box p\rightarrow p)\oplus (p\rightarrow \Diamond p)$;
    \item $\mathsf{IGL}$ \cite{IGLlogicdasmarinvan} is the set of validities of the class of $\mathbf{FS}$-frames where $R$ is transitive, and $\leq\circ R$ has no infinite paths, i.e., there is no infinite sequence $x_{1}\leq y_{1}Rx_{2}\leq y_{2}Rx_{3}\dots$.
\end{itemize}

The following are analogous to the classical completeness results:

\begin{theorem}\cite[pp.56]{Simpson1994}
    For $(X,\leq,R)$ an $\mathbf{FS}$-frame, we have:
    \begin{enumerate}
        \item $X$ is an $\mathbf{FST}$-frame if and only if $R$ is reflexive;
        \item $X$ is an $\mathbf{FSK4}$-frame if and only if $R$ is transitive;
        \item $X$ is an $\mathbf{FS4}$-frame if and only if $R$ is a preorder (reflexive and transitive).
    \end{enumerate}
\end{theorem}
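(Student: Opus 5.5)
We treat each clause as a frame-correspondence statement. For a frame $X=(X,\leq,R)$ and $L\in\{\mathsf{IKT},\mathsf{IK4},\mathsf{IS4}\}$, "$X$ is an $\mathbf{FS}_{L}$-frame" means that every valuation by upsets validates the extra axioms of $L$. We use the standard Fischer-Servi semantics on upsets $U$: $\Box U=\{x : \forall y\geq x\,\forall z\,(yRz\rightarrow z\in U)\}=\Box_{\leq\circ R}U$ and $\Diamond U=\Diamond_R U$, exactly as in the definition of $\mathbf{FS}$-spaces. Clause (iii) follows from (i) and (ii), since $\mathsf{IS4}$ is axiomatized by the union of the extra axioms. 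So it suffices to prove, for each of the two pairs of axioms, that the pair is valid on $X$ if and only if $R$ is reflexive (respectively transitive).

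Soundness directions. If $R$ is reflexive and $x\in U$, then $xRx$ gives $x\in\Diamond U$. If $x\in\Box U$, then taking $y=x\leq x$ and $xRx$ gives $x\in U$.

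If $R$ is transitive, first take $x\in\Diamond\Diamond U$: there are $xRyRz$ with $z\in U$, so $xRz$ and $x\in\Diamond U$. Now take $x\in\Box U$ and try to show $x\in\Box\Box U$. Suppose $x\leq y R z\leq y' R z'$; we must get $z'\in U$. This is where the frame conditions enter. Apply $\mathbf{F2}$ to $yRz\leq y'$ to obtain $y\leq y''Ry'$. Then $x\leq y''$, $y''Ry'$ and $y'Rz'$, so by transitivity $y''Rz'$, and hence $z'\in U$ because $x\in\Box U$.

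Correspondence directions. We use the principal upset valuations, which is the standard trick in intuitionistic correspondence. For reflexivity, let $U={\uparrow}x$; then $x\in\Diamond U$, so $xRy$ for some $y\geq x$. To upgrade this to $xRx$, the plan is to apply $\mathbf{F2}$ in reverse, or else the space condition $R[x]=R[{\uparrow}x]\cap{\downarrow}R[x]$. Indeed, $x\in R[{\uparrow}x]$ would give $xRx$, since $x\in{\downarrow}R[x]$ via $x\leq y$. We also have the $\Box$ axiom. Take $U=\{w : \exists v\,(x\leq v\wedge vRw)\}=R[{\uparrow}x]$, which is an upset by $\mathbf{F1}$ (if $vRw\leq w'$, then $\mathbf{F2}$ gives $v\leq v'Rw'$, with $x\leq v'$). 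Then $x\in\Box U$ trivially, so $x\in U$, i.e.\ $x\in R[{\uparrow}x]$. Combined with $x\in{\downarrow}R[x]$, the space condition then yields $xRx$.

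For transitivity, suppose $xRyRz$ and set $U=R[{\uparrow}x]$-style sets again. It is cleaner to use the $\Box$ axiom with $U=\{w : \exists v\geq x,\exists u\,(vRu \wedge u\leq\cdot\ldots)\}$, or more simply to use the $\Diamond$ axiom with $U={\uparrow}z$. Then $x\in\Diamond\Diamond U$ gives $x\in\Diamond{\uparrow}z$, i.e.\ $xRt$ for some $t\geq z$. The $\Box$ axiom with $U=R[{\uparrow}x]$ and the upset ${\uparrow}$ of it gives $z\in (\leq\circ R)[{\uparrow}x]$ as well. Thus $z\in R[{\uparrow}x]\cap{\downarrow}R[x]$, and the space condition gives $xRz$.

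The main obstacle is this last step, which upgrades "$R$-related up to $\leq$" to genuine $R$. It relies on the condition $R[x]=R[{\uparrow}x]\cap{\downarrow}R[x]$, which holds in $\mathbf{FS}$-spaces and in the finite frames we use. If Simpson's frames are not assumed to satisfy it, the "only if" has to be read for frames satisfying this condition (as all $\mathbf{FS}$-spaces do); this is all that is needed for the applications here.
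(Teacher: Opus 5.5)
The paper does not prove this statement; it only cites Simpson, and later uses only the ``if'' directions, to certify that $X,Y,Z$ and their reflexive variants are frames for $\mathsf{IK4}$, $\mathsf{IKT}$ and $\mathsf{IS4}$. Your arguments for those directions are correct, including the use of $\mathbf{F2}$ for $\Box p\rightarrow\Box\Box p$, which cannot be avoided. For the converses, your arguments actually establish the exact first-order correspondents on an arbitrary $\mathbf{FS}$-frame:
\begin{enumerate}
\item $\Box p\rightarrow p$ is valid iff $x\in R[{\uparrow}x]$ for all $x$;
\item $p\rightarrow\Diamond p$ is valid iff $x\in{\downarrow}R[x]$ for all $x$;
\item $\Box p\rightarrow\Box\Box p$ is valid iff ${\leq}\circ R$ is transitive;
\item $\Diamond\Diamond p\rightarrow\Diamond p$ is valid iff $xRyRz$ implies $xRt$ for some $t\geq z$.
\end{enumerate}
As you say, these yield reflexivity, respectively transitivity, once $R[x]=R[{\uparrow}x]\cap{\downarrow}R[x]$ is available.

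You should make your closing hedge definite, because this hypothesis cannot be dropped: the ``only if'' parts are false for general $\mathbf{FS}$-frames. For (i), take $x<y$ with $R=\{(x,y),(y,x),(y,y)\}$. This frame satisfies $\mathbf{F1}$ and $\mathbf{F2}$. Since $R[{\uparrow}x]=R[{\uparrow}y]=\{x,y\}$, every upset $U\neq\{x,y\}$ has $\Box U=\emptyset$. Moreover $\Diamond\{y\}=\{x,y\}$. So both extra axioms of $\mathsf{IKT}$ are valid, yet $xRx$ fails. For (ii), take $x<x'$ and $z<z'$, with $xRy$, $xRz'$, $yRz$, $yRz'$, and $x'$ related to each of $y,z,z'$. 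This is an $\mathbf{FS}$-frame, ${\leq}\circ R$ is transitive, and correspondent (iv) above holds, so both extra axioms of $\mathsf{IK4}$ are valid. But $xRyRz$ holds without $xRz$. So the correct statement is the one you actually prove: ``if'' for all $\mathbf{FS}$-frames, and ``only if'' under the space condition. That version suffices for the paper.

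Two small repairs. First, $R[{\uparrow}x]$ is an upset by $\mathbf{F2}$, not $\mathbf{F1}$; your parenthetical already uses $\mathbf{F2}$. Second, the $\Box$-half of your transitivity step is garbled and should simply read as follows. Put $U=R[{\uparrow}x]$; then $x\in\Box U$ trivially, so $x\in\Box\Box U$. From $xRy$ you get $y\in\Box U$, and from $yRz$ you get $z\in U=R[{\uparrow}x]$.
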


We can now see the following:

\begin{theorem}
    The intuitionistic modal logics $\mathsf{IKT}$, $\mathsf{IK4}$, $\mathsf{IS4}$ and $\mathsf{IGL}$ do not have the Craig interpolation property.
\end{theorem}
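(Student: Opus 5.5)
We follow the strategy of Theorem~\ref{thm: failure of coamalgamation}: modify the co-V-formation $(Z,X,Y)$ of Figure~\ref{fig:coamalgamationfailure} so that the frames lie in the relevant class, then reuse the same contradiction argument. By Corollary~\ref{cor: CIP implies coamalgamation} and Lemma~\ref{lem: injective to surjective duality}, it suffices, for each logic $L$, to produce finite $\mathbf{FS}_{\mathrm{L}}$-frames $X',Y',Z'$ and surjective $\mathbf{FS}$-morphisms $f',g'$ admitting no co-amalgam. For finite frames, Lemma~\ref{lem: FS spaces are FS frames} turns these frames into spaces once the condition $R[x]=R[{\uparrow}x]\cap{\downarrow}R[x]$ holds. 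Simpson's correspondence theorem identifies the $\mathsf{IKT}$-, $\mathsf{IK4}$- and $\mathsf{IS4}$-frames as those with $R$ reflexive, transitive, or a preorder. For $\mathsf{IGL}$ we need finite frames validating $\mathsf{IGL}$. A finite frame with $R$ transitive and $\leq\circ R$ well-founded (no cycles) validates $\mathsf{IGL}$ by its definition as the logic of such frames. The co-amalgam $W$, however, ranges over arbitrary $\mathsf{IGL}$-spaces, and that causes no problem, since the contradiction argument only uses that $W$ is an $\mathbf{FS}$-frame.

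\textbf{For $\mathsf{IK4}$ and $\mathsf{IGL}$}, I first check whether the original frames already have transitive $R$. In $X$, the only $R$-chains of length two would need an $R$-successor that itself has $R$-successors. The points $x_1,a_0,a_1,a_2$ have no $R$-successors, so no such chain exists and $R$ is vacuously transitive; the same holds for $Y$ and $Z$. Moreover $\leq\circ R$ is acyclic, because every $R$-step strictly increases height in the pictured layering. So the original co-V-formation already consists of $\mathsf{IK4}$- and $\mathsf{IGL}$-frames, and Theorem~\ref{thm: failure of coamalgamation} applies verbatim with $W$ an $\mathsf{IK4}$- (resp.\ $\mathsf{IGL}$-) space.

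\textbf{For $\mathsf{IKT}$ and $\mathsf{IS4}$}, I would replace $R$ by its reflexive closure $R^{=}$ (which stays transitive, so it also handles $\mathsf{IS4}$). This requires four checks.
\begin{enumerate}
\item \textbf{F1 and F2 survive.} These are immediate for the new reflexive pairs: close a square with $x\leq x'$ by $x'R^{=}x'$.
\item \textbf{The intersection condition survives.} If $x\leq y R^{=} z\leq t$ with $xR^{=}t$, the only new cases come from reflexive steps, and these reduce to the earlier argument.
\item \textbf{$f,g$ remain $\mathbf{FS}$-morphisms.} Forth and weak back are trivial for reflexive pairs. For strong back, when $m R^{=} m$ with $f(x)\leq m$, use the $\leq$-p-morphism condition to find $x'\geq x$ with $f(x')=m$ and take $x''=x'$.
\item \textbf{The contradiction argument is repaired.} This is the main obstacle, because the argument now has more cases. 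With $w_0Rw_1$ and $p_1(w_1)\geq x_1$, the option $p_1(w_1)=a_0$ or $a_2$ is still excluded since $x_0\not R^{=} a_0, a_2$. Next, the point $i$ with $w_2Ri$ and $w_1\leq i$ satisfies $p_1(i)\in R^{=}[x_2]\cap{\uparrow}x_1=\{a_0\}$, since $x_2\notin{\uparrow}x_1$. Likewise $p_2(i)\in\{b_2\}$, so $fp_1(i)=k_0\neq k_2=gp_2(i)$ as before.
\end{enumerate}
One must check carefully that reflexivity introduces no new elements into these intersections. If it did (for instance, if some point were both $\geq x_1$ and $R^{=}$-related to $x_2$), I would adjust the frames by duplicating points, but from the figure the intersections remain singletons. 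Combining the four cases with Corollary~\ref{cor: CIP implies coamalgamation} yields the theorem.
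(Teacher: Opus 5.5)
Your proposal is correct and is essentially the paper's proof. The original co-V-formation already consists of $\mathsf{IK4}$- and $\mathsf{IGL}$-frames, and for $\mathsf{IKT}$ and $\mathsf{IS4}$ you take the reflexive closure of $R$, recheck $\mathbf{F1}$, $\mathbf{F2}$ and the morphism conditions (strong back via the $\leq$-p-morphism property), and rerun the contradiction. You usefully make that last step explicit through $R^{=}[x_2]\cap{\uparrow}x_1=\{a_0\}$ and $R^{=}[y_2]\cap{\uparrow}y_1=\{b_2\}$, and you also check the condition $R[x]=R[{\uparrow}x]\cap{\downarrow}R[x]$, which the paper skips.

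One small slip: acyclicity of $\leq\circ R$ does not come from $R$-steps raising height in the picture, since $x_0Rx_1$ and $a_4Ra_2$ are drawn level. It comes from the fact that no point above an $R$-successor has an $R$-successor, so every $\leq\circ R$-path has length at most one.
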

\begin{proof}
    For $\msf{IK4}$ and $\msf{IGL}$, observe that the $\mbf{FS}$-frames $X, Y, Z$ defined above are $\msf{IK4}$ and $\msf{IGL}$ frames: the frames are obviously transitive, and there are no infinite paths since there are no loops with respect to the $R$-relation. We have shown that the co-V-formation does not amalgamate in the class of $\mbf{FS}$-frames, so it also does not amalgamate in a smaller class of frames. Therefore, again by Corollary \ref{cor: CIP implies coamalgamation}, $\mathsf{IK4}$ and $\mathsf{IGL}$ do not have Craig interpolation.

    For $\msf{IS4}$ and $\msf{IKT}$, consider the frames $X', Y', Z'$ obtained from $X, Y, Z$ by making the relation $R$ reflexive. These are still $\mbf{FS}$-frames (because $\mbf{F1}$ and $\mbf{F2}$ become trivial if $x=y$). We check that the maps $f$ and $g$ are still $\mbf{FS}$-morphisms. The first two conditions are immediate. The weak back condition is also straightforward (if $z = f(x)$, take $x' = x$). For the strong back condition, assume that $f(x)\le m$ and $mRm$. Because $f$ is a $\le$-p-morphism, there is some $x\le x'$ such that $f(x') = m$. We also have $x'Rx'$ and $f(x')\le m$, which shows that the strong back condition holds. Therefore, the frames $X', Y', Z'$ are valid $\msf{IS4}$ and $\msf{IKT}$ frames, and the morphisms $f,g$ are valid $\mathbf{FS}$-morphisms. An argument identical to the one in Theorem \ref{thm:FS_no_amal} shows that the co-V-formation $(Z, X, Y)$ cannot have an amalgam, thus completing the proof.
\end{proof}

\section{Conclusions}

In this paper, we have shown that the Fischer-Servi logic $\mathsf{IK}$ and many of its related systems studied in the literature fail to have the Craig Interpolation Property (CIP), by demonstrating that the corresponding classes of algebras do not have the amalgamation property. This naturally raises a number of follow-up questions.

Most importantly, which extensions of $\mathsf{IK}$ do enjoy the CIP? This question can, in turn, be divided into several subquestions. First, which axioms involving $\Box$ and $\Diamond$ yield logics with the CIP?

Another natural direction is to ask whether Fischer–Servi logics over stronger intuitionistic bases exhibit the same behaviour. For example, one could consider adding the G\"odel–Dummett axiom $(p \rightarrow q) \vee (q \rightarrow p)$. We note that this axiom fails in the kind of counterexample provided here, as it forces the underlying frames to be linear. Thus, the counterexample in this paper does not apply to these logics, leaving the study of interpolation for them as an interesting open question.

\paragraph{Acknowledgments} We would like to thank Iris van der Giessen and Marianna Girlando for many interesting discussions on interpolation for intuitionistic modal logics, which  motivated us to  initiate this project. We also thank Guram Bezhanishvili for sharing his recollections of his work on interpolation in the early 2000s, and Qian Chen for helpful suggestions. We are also grateful to Balder ten Cate and Rosalie Iemhoff for fruitful discussions on interpolation in intuitionistic and modal logics.

\printbibliography[
    heading=bibintoc,
    title={Bibliography}
]

@article{Aguilera2025,
	title        = {Intuitionistic G\"{o}del-L\"{o}b without Sharps},
	author       = {Aguilera,  Juan P. and Pacheco,  Leonardo},
	year         = {2025},
	month        = sep,
	journal      = {ACM Transactions on Computational Logic},
	publisher    = {Association for Computing Machinery (ACM)},
	volume       = {26},
	number       = {4},
	pages        = {1–14},
	issn         = {1557-945X},
	url          = {http://dx.doi.org/10.1145/3748649}
}

@article{Bezhanishvili1998,
	title        = {Varieties of Monadic Heyting Algebras. Part I},
	author       = {Bezhanishvili,  Guram},
	year         = {1998},
	month        = nov,
	journal      = {Studia Logica},
	publisher    = {Springer Science and Business Media LLC},
	volume       = {61},
	number       = {3},
	pages        = {367–402},
	issn         = {1572-8730},
	url          = {http://dx.doi.org/10.1023/A:1005073905902}
}

@article{Bezhanishvili1999,
	title        = {Varieties of Monadic Heyting Algebras Part II: Duality Theory},
	author       = {Bezhanishvili,  Guram},
	year         = {1999},
	month        = jan,
	journal      = {Studia Logica},
	publisher    = {Springer Science and Business Media LLC},
	volume       = {62},
	number       = {1},
	pages        = {21–48},
	issn         = {1572-8730},
	url          = {http://dx.doi.org/10.1023/A:1005173628262}
}

@article{Bezhanishvili2000,
	title        = {Varieties of Monadic Heyting Algebras. Part III},
	author       = {Bezhanishvili,  Guram},
	year         = {2000},
	month        = feb,
	journal      = {Studia Logica},
	publisher    = {Springer Science and Business Media LLC},
	volume       = {64},
	number       = {2},
	pages        = {215–256},
	issn         = {1572-8730},
	url          = {http://dx.doi.org/10.1023/A:1005285631357}
}

@book{Blackburn2002-fd,
	title        = {Modal logic},
	author       = {Blackburn, Patrick and de Rijke, Maarten and Venema, Yde},
	year         = {2002},
	publisher    = {Cambridge University Press},
	address      = {Cambridge, England},
	volume       = {Cambridge tracts in theoretical computer science},
	number       = {53}
}

@incollection{BtCI26,
	title        = {Six Proofs of Interpolation for the Modal Logic K},
	author       = {Nick Bezhanishvili and Balder ten Cate and Rosalie Iemhoff},
	year         = {2026},
	booktitle    = {Theory and Applications of Craig Interpolation},
	publisher    = {Ubiquity Press},
	editor       = {Balder ten Cate and  Jean Christoph Jung and Patrick Koopmann and Christoph Wernhard and Frank Wolter}
}

@book{Chagrov1997-cr,
	title        = {Modal Logic},
	author       = {Chagrov, Alexander and Zakharyaschev, Michael},
	year         = {1997},
	publisher    = {Clarendon Press},
	address      = {Oxford, England},
	series       = {Oxford Logic Guides}
}

@incollection{CZERMAK1975381,
	title        = {Interpolation Theorem for Some Modal Logics},
	author       = {J. Czermak},
	year         = {1975},
	booktitle    = {Logic Colloquium '73},
	publisher    = {Elsevier},
	series       = {Studies in Logic and the Foundations of Mathematics},
	volume       = {80},
	pages        = {381--393},
	issn         = {0049-237X},
	url          = {https://www.sciencedirect.com/science/article/pii/S0049237X08719578},
	editor       = {H.E. Rose and J.C. Shepherdson}
}

@book{Esakiach2019HeyAlg,
	title        = {Heyting algebras},
	author       = {Esakia, L.},
	year         = {2019},
	publisher    = {Springer, Cham},
	series       = {Trends in Logic---Studia Logica Library},
	volume       = {50},
	note         = {English translation of the original 1985 book},
	translator   = {A.~Evseev},
	editor       = {Bezhanishvili, G. and Holliday, W. H.}
}

@article{fischerservi1984,
	title        = {Axiomatizations for some intuitionistic modal logics},
    author={Giséle Fischer-Servi},
	year         = {194},
	journal      = {Rendiconti del Seminario Matematico - PoliTO},
	volume       = {42},
	number       = {3},
	pages        = {179--194}
}

@book{Font2016-dk,
	title        = {Abstract algebraic logic -- An introductory textbook},
	author       = {Font, Josep Maria},
	year         = {2016},
	publisher    = {College Publications},
	address      = {London}
}

@inbook{Fre2024,
	title        = {Mechanised Uniform Interpolation for Modal Logics K,  GL,  and iSL},
	author       = {Férée,  Hugo and Giessen,  Iris van der and Gool,  Sam van and Shillito,  Ian},
	year         = {2024},
	booktitle    = {Automated Reasoning},
	publisher    = {Springer Nature Switzerland},
	pages        = {43–60},
	issn         = {1611-3349},
	url          = {http://dx.doi.org/10.1007/978-3-031-63501-4_3}
}

@incollection{Fus26,
	title        = {Interpolation in Non-Classical Logics},
	author       = {Wesley Fussner},
	year         = {2026},
	booktitle    = {Theory and Applications of Craig Interpolation},
	publisher    = {Ubiquity Press},
	editor       = {Balder ten Cate and  Jean Christoph Jung and Patrick Koopmann and Christoph Wernhard and Frank Wolter}
}

@inproceedings{Girlando2023,
	title        = {Intuitionistic S4 is decidable},
	author       = {Girlando,  Marianna and Kuznets,  Roman and Marin,  Sonia and Morales,  Marianela and Straßburger,  Lutz},
	year         = {2023},
	month        = jun,
	booktitle    = {2023 38th Annual ACM/IEEE Symposium on Logic in Computer Science (LICS)},
	publisher    = {IEEE},
	pages        = {1–13},
	url          = {http://dx.doi.org/10.1109/LICS56636.2023.10175684}
}

@book{GM05,
	title        = {Interpolation and definability: modal and intuitionistic logics},
	author       = {Gabbay, Dov M and Maksimova, Larisa},
	year         = {2005},
	publisher    = {Oxford University Press},
	volume       = {1}
}

@article{Halmos1954-1956,
	title        = {Algebraic logic, I. Monadic boolean algebras},
	author       = {Halmos, Paul R.},
	year         = {1954},
	journal      = {Compositio Mathematica},
	publisher    = {Kraus Reprint},
	volume       = {12},
	pages        = {217--249},
	url          = {http://eudml.org/doc/88816},
	language     = {eng}
}

@book{HandbookInterp26,
	title        = {Theory and Applications of Craig Interpolation},
	author       = {Balder ten Cate and Jean Christoph Jung and  Patrick Koopmann and Christoph Wernhard and Frank Wolter},
	year         = {2026},
	publisher    = {Ubiquity Press},
	url          = {https://cibd.bitbucket.io/taci/}
}

@article{Iemhoff2019,
	title        = {Uniform interpolation and the existence of sequent calculi},
	author       = {Iemhoff,  Rosalie},
	year         = {2019},
	month        = nov,
	journal      = {Annals of Pure and Applied Logic},
	publisher    = {Elsevier BV},
	volume       = {170},
	number       = {11},
	pages        = {102711},
	issn         = {0168-0072},
	url          = {http://dx.doi.org/10.1016/j.apal.2019.05.008}
}

@inbook{Iemhoff2024,
	title        = {Proof Theory for Lax Logic},
	author       = {Iemhoff,  Rosalie},
	year         = {2024},
	booktitle    = {Dick de Jongh on Intuitionistic and Provability Logics},
	publisher    = {Springer International Publishing},
	pages        = {203–229},
	issn         = {2211-2766},
	url          = {http://dx.doi.org/10.1007/978-3-031-47921-2_8}
}

@inproceedings{IGLlogicdasmarinvan,
	title        = {Intuitionistic G\"{o}del-L\"{o}b Logic,  à la Simpson: Labelled Systems and Birelational Semantics},
	author       = {Das,  Anupam and van der Giessen,  Iris and Marin,  Sonia},
	year         = {2024},
	journal      = {LIPIcs,  Volume 288,  CSL 2024},
	publisher    = {Schloss Dagstuhl – Leibniz-Zentrum f\"{u}r Informatik},
	volume       = {288},
	pages        = {22:1--22:18},
	url          = {https://drops.dagstuhl.de/entities/document/10.4230/LIPIcs.CSL.2024.22},
	copyright    = {Creative Commons Attribution 4.0 International license},
	language     = {en}
}

@article{Maks1979,
	title        = {Interpolation theorems in modal logics and amalgamable varieties of topological Boolean algebras},
	author       = {Maksimova, Larisa},
	year         = {1979},
	journal      = {Algebra and Logic},
	publisher    = {Springer},
	volume       = {18},
	number       = {5},
	pages        = {348--370}
}

@article{Maksimova1977,
	title        = {Craig’s theorem in superintuitionistic logics and amalgamable varieties of pseudo-boolean algebras},
	author       = {Larisa Maksimova},
	year         = {1977},
	month        = nov,
	journal      = {Algebra and Logic},
	publisher    = {Springer Science and Business Media LLC},
	volume       = {16},
	number       = {6},
	pages        = {427–455},
	issn         = {1573-8302},
	url          = {http://dx.doi.org/10.1007/BF01670006}
}

@inbook{Marx1998,
	title        = {Interpolation in Modal Logic},
	author       = {Marx,  Maarten},
	year         = {1998},
	booktitle    = {Algebraic Methodology and Software Technology},
	publisher    = {Springer Berlin Heidelberg},
	pages        = {154–163},
	issn         = {0302-9743},
	url          = {http://dx.doi.org/10.1007/3-540-49253-4_13}
}

@article{Marx1998-MAAFOI,
	title        = {Failure of Interpolation in Combined Modal Logics},
	author       = {Maarten Marx and Carlos Areces},
	year         = {1998},
	journal      = {Notre Dame Journal of Formal Logic},
	publisher    = {Duke University Press},
	volume       = {39},
	number       = {2},
	pages        = {253--273}
}

@incollection{Met26,
	title        = {Interpolation and Amalgamation},
	author       = {George Metcalfe},
	year         = {2026},
	booktitle    = {Theory and Applications of Craig Interpolation},
	publisher    = {Ubiquity Press},
	editor       = {Balder ten Cate and  Jean Christoph Jung and Patrick Koopmann and Christoph Wernhard and Frank Wolter}
}

@misc{metcalfesuperinterpolation,
	title        = {Interpolation and Amalgamation},
	author       = {Metcalfe,  George},
	year         = {2025},
	publisher    = {arXiv},
	url          = {https://arxiv.org/abs/2512.00924},
	copyright    = {Creative Commons Attribution 4.0 International}
}

@inbook{Muravitsky2014,
	title        = {Logic KM: A Biography},
	author       = {Muravitsky,  Alexei},
	year         = {2014},
	booktitle    = {Leo Esakia on Duality in Modal and Intuitionistic Logics},
	publisher    = {Springer Netherlands},
	pages        = {155–185},
	issn         = {2211-2766},
	url          = {http://dx.doi.org/10.1007/978-94-017-8860-1_7}
}

@periodical{paivaartemov,
	title        = {Journal of Applied Logic},
	title        = {Intuitionistic Modal Logic 2017},
	year         = {2021},
	volume       = {8},
	editor       = {Valeria de Paiva and Sergey Artemov}
}

@incollection{PalmigianoDualities2004,
	title        = {Dualities for some intuitionistic modal logics},
	author       = {Palmigiano, Alessandra},
	year         = {2004},
	booktitle    = {Liber Amicorum for Dick de Jongh},
	publisher    = {Institute for Logic, Language and Computation},
	pages        = {151--167}
}

@misc{paperonnondistributivedualities,
	title        = {Superamalgamation for modal lattices via non-distributive dualities},
	author       = {Almeida,  Rodrigo Nicolau and Bezhanishvili,  Nick and Lemal,  Simon},
	year         = {2026},
	publisher    = {arXiv},
	url          = {https://arxiv.org/abs/2602.20380},
	copyright    = {Creative Commons Attribution 4.0 International}
}

@phdthesis{Simpson1994,
	title        = {The Proof Theory and Semantics of Intuitionistic Modal Logic},
	author       = {Simpson, Alex K.},
	year         = {1994},
	school       = {University of Edinburgh}
}

@article{vanderGiessen2021,
	title        = {Sequent Calculi for Intuitionistic G\"{o}del–L\"{o}b Logic},
	author       = {van der Giessen,  Iris and Iemhoff,  Rosalie},
	year         = {2021},
	month        = may,
	journal      = {Notre Dame Journal of Formal Logic},
	publisher    = {Duke University Press},
	volume       = {62},
	number       = {2},
	issn         = {0029-4527},
	url          = {http://dx.doi.org/10.1215/00294527-2021-0011}
}

@misc{vandergiessenshilito,
	title        = {Uniform interpolation with constructive diamond},
	author       = {van der Giessen,  Iris and Shillito,  Ian},
	year         = {2026},
	publisher    = {arXiv},
	url          = {https://arxiv.org/abs/2602.16880},
	copyright    = {Creative Commons Attribution Non Commercial No Derivatives 4.0 International}
}

@phdthesis{vandergiessenthesis,
	title        = {Uniform Interpolation and Admissible Rules: Proof-theoretic investigations into (intuitionistic) modal logics},
	author       = {van der Giessen,  Iris},
	url          = {http://dx.doi.org/10.33540/1486},
	school       = {Utrecht University Library}
}

@inproceedings{Wolter1997IntuitionisticML,
	title        = {Intuitionistic Modal Logics as Fragments of Classical Bimodal Logics},
	author       = {Frank Wolter and Michael Zakharyaschev},
	year         = {1997},
	url          = {https://api.semanticscholar.org/CorpusID:6398005}
}

@inbook{Wolter1999,
	title        = {Intuitionistic Modal Logic},
	author       = {Wolter,  Frank and Zakharyaschev,  Michael},
	year         = {1999},
	booktitle    = {Logic and Foundations of Mathematics},
	publisher    = {Springer Netherlands},
	pages        = {227–238},
	url          = {http://dx.doi.org/10.1007/978-94-017-2109-7_17}
}

\section{Appendix: Some folklore proofs}

As noted in the introduction, some results we would expect to be available in the literature from an algebraic and dual theoretic point of view seem to be missing. In this appendix we provide some short proofs of these results, employing some standard techniques.

\subsection{Logics, Algebras and Duality}

We start with the language $\mathcal{L}_{\Box}$, and consider the following systems:
\begin{enumerate}
    \item $\mathsf{iK}=\mathsf{IPC}\oplus (\Box(p\wedge q)\leftrightarrow (\Box p\wedge \Box q)\oplus \Box\top$.
    \item $\mathsf{iT}=\mathsf{iK}\oplus \Box p\rightarrow p$;
    \item $\mathsf{iK4}=\mathsf{iK}\oplus \Box p\rightarrow \Box\Box p$;
    \item $\mathsf{iS4}=\mathsf{iK4}\oplus \Box p\rightarrow p$.
\end{enumerate}

We denote by $\mathbf{iL}$ the category of $\Box$-modal Heyting algebras, i.e., pairs $(H,\Box)$ where $H$ is a Heyting algebra and $\Box$ is a modal operation, satisfying the axioms from $L$. By the basic algebraic completeness, we have completeness of the logics $\mathsf{iL}$ with respect to $\mathsf{iL}$-algebras. The dualities by Palmigiano \cite{PalmigianoDualities2004} also cover these logics:

\begin{definition}
    A tuple $(X,\leq,R,\tau)$ is called an \emph{iK-space} if $(X,\leq,\tau)$ is an Esakia space, and satisfies the compatibility condition $R={\leq}\circ R\circ{\leq}$. A tuple $(X,\leq,R)$ in these conditions is called an \emph{iK-frame}.
\end{definition}

\begin{definition}
    A p-morphism $f\colon (X,R)\to (Y,R)$ between iK-spaces is called an \emph{iK}-morphism if $f$ is also an $R$-bounded morphism, i.e., whenever $xRy$ then $f(x)Rf(y)$, and whenever $f(x)Ry$ then there is some $y'$ such that $xRy'$ and $f(y')=y$.
\end{definition}

We denote by $\mathbf{iK}_{Sp}$ the category of $\mathbf{iK}$-spaces with their morphisms. Then by Palmigiano, we have:

\begin{theorem}\cite[Theorem 6.1.11]{PalmigianoDualities2004}
    The categories $\mathbf{iK}_{\mathrm{alg}}$ and $\mathbf{iK}_{\mathrm{sp}}$ are dually equivalent.
\end{theorem}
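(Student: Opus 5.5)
The plan is to lift Esakia duality along the modal operator, in the way Jónsson–Tarski duality is obtained for $\mathsf{K}$. I read the definition of iK-space as also containing the topological conditions implicit in Palmigiano's setting, as for $\mathbf{FS}$-spaces: $R[x]$ is closed for every $x$, and $\Box_R U$ is a clopen upset whenever $U$ is. Without these the result fails already for infinite spaces.

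\emph{Algebra to space.} For an $\mathbf{iK}$-algebra $(H,\Box)$, take the Esakia space $\mathsf{Spec}(H)$ of prime filters. Define $xRy$ iff $\Box^{-1}[x]\subseteq y$, that is, $\Box a\in x$ implies $a\in y$.
\begin{enumerate}
\item Compatibility $R={\leq}\circ R\circ{\leq}$ is immediate from monotonicity of filters: if $x\subseteq x'$, $x'Ry'$ and $y'\subseteq y$, then $\Box a\in x$ gives $\Box a\in x'$, so $a\in y'\subseteq y$. The reverse inclusion uses reflexivity of $\leq$.
\item $R[x]=\bigcap_{\Box a\in x}\{y: a\in y\}$ is an intersection of clopens, hence closed.
\item The key representation lemma is $\Box a\in x$ iff $R[x]\subseteq\widehat a$. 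For the nontrivial direction, suppose $\Box a\notin x$. Since $\Box$ preserves $1$ and $\wedge$, the set $\Box^{-1}[x]$ is a filter, and it does not contain $a$. The prime filter theorem then gives a prime $y\supseteq\Box^{-1}[x]$ with $a\notin y$, so $xRy$ and $y\notin\widehat a$.
\item It follows that $\Box_R\widehat a=\widehat{\Box a}$, which is a clopen upset, so $\mathsf{Spec}(H)$ is an iK-space and $a\mapsto\widehat a$ preserves $\Box$.
\end{enumerate}

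\emph{Morphisms from algebras to spaces.} For a homomorphism $h$, Esakia duality already makes $h_*=h^{-1}[-]$ a continuous p-morphism. Forth for $R$ is direct: if $xRy$ and $\Box a\in h^{-1}[x]$, then $h(\Box a)=\Box h(a)\in x$, so $h(a)\in y$. For the back condition, suppose $h^{-1}[x]\,R\,y$. I will separate the filter $\Box^{-1}[x]$ from the ideal generated by $h[H\setminus y]$.
\begin{enumerate}
\item Disjointness: suppose $c\in\Box^{-1}[x]$ and $c\leq h(d)$ with $d\notin y$. Then $h(\Box d)=\Box h(d)\geq\Box c\in x$, so $\Box d\in h^{-1}[x]$, hence $d\in y$, a contradiction.
\item The prime filter theorem yields a prime filter $y'$ with $xRy'$ and $h^{-1}[y']\subseteq y$.
\item The reverse inclusion $y\subseteq h^{-1}[y']$ needs $h[y]\subseteq y'$, which I get by also putting $h[y]$ into the filter being extended. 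Disjointness survives because $y$ is a filter closed under finite meets and $h$ preserves them.
\end{enumerate}

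\emph{Space to algebra.} For an iK-space $X$, the clopen upsets form a Heyting algebra, and $\Box_R$ maps it into itself by assumption, preserving $X$ and intersections. Given an iK-morphism $f$, the map $f^{-1}$ commutes with $\Box_R$: forth gives $f^{-1}[\Box_RU]\subseteq\Box_R f^{-1}[U]$, and the back condition gives the converse.

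\emph{Natural isomorphisms.} On the algebra side the Esakia unit $a\mapsto\widehat a$ already commutes with $\Box$, by the representation lemma above. On the space side the Esakia isomorphism $\varepsilon\colon x\mapsto\{U: x\in U\}$ must preserve and reflect $R$. Preservation is easy. Reflection is the step I expect to be the main obstacle, since it is where the topological hypotheses enter. Suppose $\neg(xRy)$.
\begin{enumerate}
\item By compatibility, $R[x]$ is an upset, and it is closed by assumption.
\item Since $y\notin R[x]$, Priestley separation gives, for each $z\in R[x]$, a clopen upset containing $z$ but not $y$.
\item Compactness of $R[x]$ lets finitely many of these cover it, and their union $U$ is a clopen upset with $R[x]\subseteq U$ and $y\notin U$.
\item Then $x\in\Box_RU$ while $y\notin U$, so $\neg(\varepsilon(x)R\varepsilon(y))$.
\end{enumerate}
Naturality of both isomorphisms is inherited from Esakia duality, which completes the dual equivalence.
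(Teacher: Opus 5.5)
You follow the route the paper indicates. The paper gives no proof beyond citing Palmigiano and describing the duality as Esakia duality combined with J\'onsson--Tarski duality, via the canonical relation $xRy\iff\forall a\,(\Box a\in x\Rightarrow a\in y)$ and the operator $\Box_R$. You are also right that the paper's definition of iK-space must be supplemented by the conditions that $R[x]$ is closed and that $\Box_R U$ is clopen for every clopen upset $U$. Your remaining steps are correct: the representation lemma, the forth condition, $f^{-1}[\Box_R U]=\Box_R f^{-1}[U]$, and the compactness argument reflecting $R$.

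The step that does not work as written is disjointness in part (3) of the back condition. The facts that $y$ is closed under finite meets and that $h$ preserves them only show that the filter generated by $\Box^{-1}[x]\cup h[y]$ consists of elements above some $c\wedge h(e)$ with $\Box c\in x$ and $e\in y$. They do not exclude $c\wedge h(e)\le h(d)$ with $d\notin y$. Case (1) does not cover this, since $\Box(c\wedge h(e))=\Box c\wedge h(\Box e)$ need not lie in $x$.

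These lattice-theoretic facts genuinely cannot suffice. Let $h$ embed the chain $0<m<1$ into the four-element Boolean algebra $\{0,p,q,1\}$ by $m\mapsto p$, with $\Box$ the identity on both sides. Then $h$ preserves $\wedge,\vee,0,1,\Box$ (but not $\to$). For $x=\{q,1\}$ and $y=\{m,1\}$ we have $h^{-1}[x]=\{1\}\,R\,y$. Yet $q\wedge p=0=h(0)$ with $q\in\Box^{-1}[x]$, $p\in h[y]$ and $0\notin y$. Indeed, the only $R$-successor of $x$ is $x$ itself, and $h^{-1}[x]\neq y$, so the back condition fails.

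What is missing is precisely that $h$ preserves $\to$. Suppose $c\wedge h(e)\le h(d)$ with $\Box c\in x$, $e\in y$, $d\notin y$. Then $c\le h(e)\to h(d)=h(e\to d)$. Moreover $e\to d\notin y$, because $y$ is a filter containing $e$ but not $d$. So you are back in case (1). With this line added, the back condition $h^{-1}[y']=y$ and hence the whole argument go through.
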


This is obtained essentially by combining Esakia duality \cite{Esakiach2019HeyAlg} with Jónnson-Tarski duality (see e.g. \cite{Blackburn2002-fd}). Explicitly, given $(H,\Box)$ a $\Box$-modal Heyting algebra, if $X=\mathsf{Spec}(H)$ is its spectrum, we define
\begin{equation*}
    xRy \iff \forall a\in H,  \Box a\in x\rightarrow  a\in y.
\end{equation*}

In turn, given an iK-space $(X,\leq,R,\tau)$, we consider for $U$ a (clopen) upset:
\begin{equation*}
    \Box_{R}U=\{x\in X : \forall y(xRy \rightarrow y\in U\}.
\end{equation*}
Note that the compatibility condition ensures that this is an upset. Note also that this definition makes sense for arbitrary upsets, and we thus obtain that $(\mathsf{Up}(X),\Box_{R})$ is then an iK-algebra, always. The following correspondences, showing that the above duality restricts appropriately, follow from more general principles, though we sketch them out here for completeness:

\begin{definition}
    Given an iK-space $(X,\leq,R,\tau)$ and a logic $\mathsf{iL}$ we say that it is an $\mathbf{iL}$-space if $\mathsf{ClopUp}(X)$ is an $\mathbf{iL}$-algebra. Given an iK-frame, $(X,\leq,R)$, we say that it is an $\mathbf{iL}$-frame if $(\mathsf{Up}(X),\Box_{R})$ is an $\mathbf{iL}$-algebra.
\end{definition}

\begin{proposition}\label{prop: equivalence of algebras and some spaces}
    The following categories are equivalent:
    \begin{enumerate}
        \item \label{eq: reflexive} The category $\mathbf{iT}_{\mathrm{alg}}$ and $\mathbf{iT}_{\mathrm{sp}}$ of iK-spaces with $R$ a reflexive relation.
        \item \label{eq: transitive} The category $\mathbf{iK4}_{\mathrm{alg}}$ and $\mathbf{iK4}_{\mathrm{sp}}$ of iK-spaces with $R$ a transitive relation.
        \item The category $\mathbf{iS4}_{\mathrm{alg}}$ and $\mathbf{iS4}_{\mathrm{sp}}$ of iK-spaces with $R$ a reflexive and transitive relation.
    \end{enumerate}
\end{proposition}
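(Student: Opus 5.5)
The plan is to obtain this as a restriction of Palmigiano's duality between $\mathbf{iK}_{\mathrm{alg}}$ and $\mathbf{iK}_{\mathrm{sp}}$. Since the algebra classes are full subcategories of $\mathbf{iK}_{\mathrm{alg}}$ and the space classes are full subcategories of $\mathbf{iK}_{\mathrm{sp}}$, it suffices to check the following: for an iK-space $(X,\leq,R,\tau)$, the algebra $(\mathsf{ClopUp}(X),\Box_R)$ validates $\Box p\rightarrow p$, respectively $\Box p\rightarrow\Box\Box p$, exactly when $R$ is reflexive, respectively transitive. Together with $H\cong\mathsf{ClopUp}(\mathsf{Spec}(H))$, this shows the object parts of the duality restrict, and morphisms come for free. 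The $\mathbf{iS4}$ case then follows by combining the first two.

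The soundness directions are routine. If $R$ is reflexive and $x\in\Box_R U$, then $xRx$ gives $x\in U$. If $R$ is transitive and $x\in\Box_R U$, then for $xRyRz$ we get $xRz$, so $z\in U$; hence $y\in\Box_R U$ and $x\in\Box_R\Box_R U$.

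For the converse I will work on the canonical side, using the explicit description $xRy\iff \Box^{-1}x\subseteq y$ on prime filters.
\begin{enumerate}
\item \emph{Reflexivity.} If $\Box a\leq a$ for all $a$, then $\Box a\in x$ implies $a\in x$, so $\Box^{-1}x\subseteq x$, i.e. $xRx$.
\item \emph{Transitivity.} Suppose $\Box a\leq\Box\Box a$ for all $a$, and $xRy$, $yRz$. If $\Box a\in x$, then $\Box\Box a\in x$, so $\Box a\in y$, so $a\in z$. Hence $xRz$.
\end{enumerate}
Because every iK-space is isomorphic to the dual of its algebra of clopen upsets, with $R$ recovered by this formula, the argument applies to an arbitrary iK-space $X$ whose clopen-upset algebra is an $\mathbf{iT}$- (respectively $\mathbf{iK4}$-) algebra. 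I should double-check that this recovery of $R$ is part of Palmigiano's theorem, namely that the unit map is an isomorphism of iK-spaces. That is the one point needing care, and it is the expected main obstacle, though only a bookkeeping one.

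An alternative for the converse, if one wants to avoid the canonical description, is to argue topologically with separating clopen upsets. For transitivity: if $xRyRz$ but not $xRz$, one would use closedness of $R[x]$ together with the compatibility $R={\leq}\circ R\circ{\leq}$ (so $R[x]$ is an upset) to find a clopen upset $U\supseteq R[x]$ with $z\notin U$. Then $x\in\Box_R U$ but $x\notin\Box_R\Box_R U$. This requires knowing $R[x]$ is a closed upset, which is not explicitly among the stated axioms here, so I would rely on the canonical-filter argument above instead.
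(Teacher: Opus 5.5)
Your proposal is correct and follows essentially the same route as the paper. Both argue soundness on the frame side, and for the converse both use the canonical argument on prime filters via $xRy \iff \Box^{-1}x\subseteq y$; the paper phrases this by contradiction (assuming $\neg(xRx)$, resp.\ $xRy$, $yRz$ but not $xRz$) rather than directly, and it tacitly relies on Palmigiano's duality for the point you flag about recovering $R$ on the space from its clopen-upset algebra.
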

\begin{proof}
    \eqref{eq: reflexive}. By the standard soundness, $\Box p\rightarrow p$ is valid in a frame with a reflexive relation. Assume then that $\neg (xRx)$. By our definition, this means that for some $a\in H$, $\Box a\in x$ and $a\notin x$. If $\Box a\rightarrow a=1$ then since $x$ is a filter, $\Box a\rightarrow a\in x$; then we would have $\Box a\wedge (\Box a\rightarrow a)\in x$ since $x$ is closed under $\wedge$. But it follows (essentially by Modus Ponens) that $\Box a\wedge (\Box a\rightarrow a)\leq a$, so then since $x$ is upwards closed, a contradiction.

    \eqref{eq: transitive}. The soundness is again obvious. Assume that $xRy$ and $yRz$ but not $xRz$; then by definition there is some $a\in H$ such that $\Box a\in x$ and $a\notin z$; by the same arguments as in the previous case, if $\Box a\rightarrow \Box\Box a$ was valid, then $\Box\Box a\in x$ would imply by definition $\Box a\in y$ and so $a\in z$, a contradiction.

    The case for (3) is a combination of (1) and (2).
\end{proof}

\begin{proposition}\label{prop: closure under frames}
    Given an iK-frame $(X,\leq,R)$ we have that:
    \begin{enumerate}
        \item $X$ is a $\mathbf{iT}$-frame if and only if $R$ is reflexive.
        \item $X$ is a $\mathbf{iK4}$-frame if and only if $R$ is transitive.
        \item $X$ is an $\mathbf{iS4}$-frame if and only if $R$ is reflexive and transitive.
    \end{enumerate}
\end{proposition}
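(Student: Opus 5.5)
We prove each item as two implications. The soundness direction is a direct computation in $(\mathsf{Up}(X),\Box_R)$. For the converse we test the axiom on one well-chosen upset, using that validity in the frame means validity under \emph{every} upset valuation. The only structural fact we need is the compatibility condition $R={\leq}\circ R\circ{\leq}$, which gives two consequences. First, $\Box_R U$ is an upset whenever $U$ is. Second, for any point $y$ the set ${\uparrow}R[y]$ is an upset, and $R[y]$ is already upward closed, since $yRz\leq z'$ implies $yRz'$. So $R[y]$ is itself an upset, and this will be the test upset.

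\textbf{Reflexivity.} If $R$ is reflexive and $x\in\Box_R U$, then $xRx$ gives $x\in U$. Hence $\Box_R U\subseteq U$, i.e.\ $\Box_R U\rightarrow U=X$ for every upset $U$. Conversely, suppose $\Box p\rightarrow p$ is valid and fix $x$. Take $U=R[x]$, an upset by the remark above. Trivially $x\in\Box_R R[x]$. Validity gives $\Box_R U\subseteq U$ for all upsets $U$, because $\Box_RU\rightarrow U=X$ holds exactly when $\Box_R U\subseteq U$. Therefore $x\in R[x]$, i.e.\ $xRx$.

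\textbf{Transitivity.} If $R$ is transitive, then $\Box_R U\subseteq\Box_R\Box_R U$: given $xRy$ and $yRz$ we have $xRz$, so $z\in U$. Conversely, suppose $\Box p\rightarrow\Box\Box p$ is valid, and let $xRy$ and $yRz$. Again take $U=R[x]$, an upset. Then $x\in\Box_R U$, so validity gives $x\in\Box_R\Box_R U$. Hence $y\in \Box_R U$, and so $z\in U=R[x]$, i.e.\ $xRz$.

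Item (iii) follows by combining (i) and (ii), since $\mathsf{iS4}=\mathsf{iK4}\oplus(\Box p\to p)$ and a frame validates a join of axioms iff it validates each.

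There is no real obstacle here. The one point needing care is that the valuation used must be an upset, which is why the compatibility condition is invoked to show that $R[x]$ is upward closed. One could instead use ${\uparrow}R[x]$ and then observe that it equals $R[x]$.
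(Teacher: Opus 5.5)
Your proof is correct and follows the route the paper indicates: the classical correspondence argument (as in Blackburn, de Rijke and Venema, Chapter 3), using the minimal valuation $p\mapsto R[x]$. You also make explicit the one point the paper leaves implicit, namely that this valuation is admissible because the compatibility condition $R={\leq}\circ R\circ{\leq}$ makes $R[x]$ an upset.
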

\begin{proof}
    Follows by very similar arguments as in the classical case, see \cite[Chapter 3]{Blackburn2002-fd}.
\end{proof}

\subsection{Craig interpolation from Superamalgamation}

\begin{definition}
    Given $\mathbf{M}$ a variety of algebras, $(A,B_{1},B_{2},h_{1},h_{2})$ a $V$-formation, we say that $(C,p_{1},p_{2})$, an amalgam, is a \emph{superamalgam} if:
    \begin{enumerate}
        \item Whenever $p_{1}(b_{1})\leq p_{2}(b_{2})$ for $b_{i}\in B_{i}$, then there is some $a\in A$ such that $b_{1}\leq h_{1}(a)$ and $h_{2}(a)\leq b_{2}$.
        \item Whenever $p_{2}(b_{2})\leq p_{1}(b_{1})$ for $b_{i}\in B_{i}$, then there is some $a\in A$ such that $b_{2}\leq h_{2}(a)$ and $h_{1}(a)\leq b_{1}$.
    \end{enumerate}
    We say that $\mathbf{M}$ has the \emph{superamalgamation property} if for every V-formation, there is a superamalgam in $\mathbf{M}$.
\end{definition}

The following proceeds by the same standard argument employed in \cite{Maks1979}:

\begin{proposition}
    Assume that $\mathsf{iL}$ is a normal modal logic above $\mathsf{iK}$. If $\mathbf{iL}$-algebras have the superamalgamation property, then $\mathsf{iL}$ has the Craig interpolation property.
\end{proposition}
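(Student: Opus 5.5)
We argue by contraposition. Suppose $\phi(\overline{p},\overline{q})\rightarrow\psi(\overline{q},\overline{r})\in\mathsf{iL}$, and consider the free $\mathbf{iL}$-algebras $A=\mathcal{F}_{\mathsf{iL}}(\overline{q})$, $B_{1}=\mathcal{F}_{\mathsf{iL}}(\overline{p},\overline{q})$ and $B_{2}=\mathcal{F}_{\mathsf{iL}}(\overline{q},\overline{r})$. Let $h_{1}\colon A\to B_{1}$ and $h_{2}\colon A\to B_{2}$ be the homomorphisms induced by the inclusions of generators.

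The first step is to check that $(A,B_{1},B_{2},h_{1},h_{2})$ is a V-formation, i.e., that $h_{1}$ and $h_{2}$ are injective. Suppose $h_{1}([\chi])=h_{1}([\chi'])$ for formulas $\chi,\chi'$ in $\overline{q}$. Then $\chi\leftrightarrow\chi'\in\mathsf{iL}$, because being a theorem does not depend on the ambient set of variables: Theorem \ref{thm: algebraic completeness theorem}, applied with the variables actually occurring, shows this. Hence $[\chi]=[\chi']$ in $A$. The same argument works for $h_{2}$.

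By the superamalgamation property, we obtain a superamalgam $(C,p_{1},p_{2})$. Consider the homomorphism
\begin{equation*}
e\colon\mathcal{F}_{\mathsf{iL}}(\overline{p},\overline{q},\overline{r})\to C
\end{equation*}
sending $\overline{p}$ to $p_{1}(\overline{p})$, $\overline{r}$ to $p_{2}(\overline{r})$, and $\overline{q}$ to the common value $p_{1}h_{1}(\overline{q})=p_{2}h_{2}(\overline{q})$; this is well defined because the amalgam diagram commutes. By freeness, $e([\phi])=p_{1}([\phi]_{B_{1}})$ and $e([\psi])=p_{2}([\psi]_{B_{2}})$. Since $\phi\rightarrow\psi\in\mathsf{iL}$, Theorem \ref{thm: algebraic completeness theorem} gives $[\phi]\leq[\psi]$ in the free algebra on all variables. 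Applying $e$ yields $p_{1}([\phi])\leq p_{2}([\psi])$.

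Condition (i) of superamalgamation now provides $a=[\chi(\overline{q})]\in A$ with $[\phi]\leq h_{1}(a)$ in $B_{1}$ and $h_{2}(a)\leq[\psi]$ in $B_{2}$. Since $h_{1}([\chi])=[\chi]_{B_{1}}$, the first inequality means $\phi\rightarrow\chi\in\mathsf{iL}$, again by Theorem \ref{thm: algebraic completeness theorem}. Likewise the second gives $\chi\rightarrow\psi\in\mathsf{iL}$. So $\chi$ is the required interpolant.

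The only delicate point is the bookkeeping between free algebras on different variable sets: injectivity of the $h_{i}$, and identifying $e([\phi])$ with $p_{1}([\phi])$. Both reduce to the fact that substitution-invariant theoremhood is independent of the surrounding variables. One minor edge case is when $\overline{q}$ is empty. This causes no trouble, because $\bot$ and $\top$ are in the language, so $A=\mathcal{F}_{\mathsf{iL}}(\emptyset)$ is still a nontrivial algebra and the same argument applies verbatim.
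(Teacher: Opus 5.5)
Your proof is correct and follows the paper's approach: the paper only cites the standard Maksimova argument (V-formation of free algebras on $\overline{q}$, on $\overline{p},\overline{q}$ and on $\overline{q},\overline{r}$, then condition (i) of the superamalgam yields the interpolant), and you have written that argument out in full. One wording fix: your argument is direct, not by contraposition, so drop the opening sentence.
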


We thus seek to establish superamalgamation for these varieties. For this we make use of a standard technique \cite{BtCI26}. Let $(X,Y_{1},Y_{2},f,g)$ a co-V-formation of iK-spaces, with $f$, and $g$ surjective iK-morphisms. Define:
\begin{equation*}
        \mathsf{Pb}(f,g)=\{(y_{1},y_{2})\in Y_{1}\times Y_{2} : f(y_{1})=g(y_{2})\}.
\end{equation*}
We define $\leq$ and $R$ pointwise on this structure. The following captures the main property we will exploit of this:

\begin{proposition}\label{prop: CIP from pullback}
    Assume that $\mathsf{iL}\in \{\mathsf{iT},\mathsf{iK4},\mathsf{iS4}\}$. Suppose that whenever $(A,B_{1},B_{2},h_{1},h_{2})$, is a V-formation of $\mathbf{iL}$-algebras, if we consider the dual co-V-formation $(X,Y_{1},Y_{2},f,g)$, we have that $\mathsf{Pb}(f,g)$ is an $\mathbf{iL}$-frame. Then $\mathsf{iL}$ has the Craig interpolation property.
\end{proposition}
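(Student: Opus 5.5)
We reduce Craig interpolation to superamalgamation via the preceding proposition (the Maksimova-style argument), and build the superamalgam as the algebra of all upsets of $W:=\mathsf{Pb}(f,g)$. So let $(A,B_1,B_2,h_1,h_2)$ be a V-formation of $\mathbf{iL}$-algebras with dual co-V-formation $(X,Y_1,Y_2,f,g)$, where $f,g$ are surjective iK-morphisms by Lemma \ref{lem: injective to surjective duality} restricted to the iK-duality. By hypothesis $W$ is an $\mathbf{iL}$-frame, so $C:=(\mathsf{Up}(W),\Box_R)$ is an $\mathbf{iL}$-algebra. Let $\pi_1,\pi_2$ be the two projections. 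We define $p_i\colon B_i\to C$ by identifying $B_i$ with $\mathsf{ClopUp}(Y_i)$ and setting $p_i(U)=\pi_i^{-1}[U]$.

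We first check that $p_1,p_2$ are homomorphisms, injective, and commute. It suffices to show that the $\pi_i$ are surjective iK-morphisms, i.e.\ $\leq$-p-morphisms and $R$-bounded morphisms; pullback of upsets along such maps then preserves $\wedge,\vee,\rightarrow,\Box$. Monotonicity of $\pi_i$ and preservation of $R$ are immediate from the pointwise definitions. Surjectivity: given $y_1$, surjectivity of $g$ gives $y_2$ with $g(y_2)=f(y_1)$. For the back condition of $R$, suppose $(y_1,y_2)\in W$ and $y_1Rz_1$. Then $f(y_1)Rf(z_1)$, i.e.\ $g(y_2)Rf(z_1)$, so the $R$-back condition for $g$ gives $z_2$ with $y_2Rz_2$ and $g(z_2)=f(z_1)$. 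Hence $(z_1,z_2)\in W$ is an $R$-successor. The $\leq$-back condition is proved identically using the p-morphism property of $g$. Commutation $p_1h_1=p_2h_2$ holds because $f\pi_1=g\pi_2$ on $W$. Injectivity of $p_i$ follows from surjectivity of $\pi_i$.

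Next we verify the superamalgam conditions. Suppose $p_1(U_1)\subseteq p_2(U_2)$ for clopen upsets $U_i\subseteq Y_i$. Unwinding, this says: for every $y_1\in U_1$ and every $y_2$ with $g(y_2)=f(y_1)$, we have $y_2\in U_2$. Equivalently, $g^{-1}[f[U_1]]\subseteq U_2$. We take $a:=f[U_1]$, so that $U_1\subseteq f^{-1}[a]$ and $g^{-1}[a]\subseteq U_2$, as required. We must check that $a$ is a clopen upset of $X$. It is an upset because $f$ is a $\leq$-p-morphism, and closed because $f$ is a continuous map between compact Hausdorff spaces. Openness is the delicate part. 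Instead we pass to $a':=X\setminus f[Y_1\setminus \ldots]$, or better we separate: the closed upset $f[U_1]$ and the closed downset $g[Y_2\setminus U_2]$ are disjoint, since $g(y_2)=f(y_1)$ with $y_1\in U_1$ forces $y_2\in U_2$. The Priestley separation property of the Esakia space $X$ then yields a clopen upset $a$ with $f[U_1]\subseteq a$ and $a\cap g[Y_2\setminus U_2]=\emptyset$, which is exactly what we need. The symmetric condition follows by the same argument with the roles of $1$ and $2$ swapped.

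The main obstacle is this last step. Three things are needed there: that $f$ maps closed upsets to closed upsets, that $g$ maps closed downsets to closed downsets, and that the separation property is available. The downset claim is the one to watch, since images of downsets under a p-morphism need not be downsets. If it fails, we would instead separate $f[U_1]$ from the closed set $g[Y_2\setminus U_2]$ using compactness, writing $a$ as a finite union of clopen upsets that avoid that set. Each point outside ${\downarrow}$ of a closed set can be separated by a clopen upset, so we would argue that $f[U_1]$ misses ${\downarrow}g[Y_2\setminus U_2]$, using the fact that $f[U_1]$ is an upset. With superamalgamation established, the preceding proposition yields the Craig interpolation property for $\mathsf{iL}$.
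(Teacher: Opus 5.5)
Your proposal is correct and takes the same route as the paper. Both arguments form $\mathsf{Up}(\mathsf{Pb}(f,g))$, show the projections are surjective $\leq$-p-morphisms and $R$-bounded morphisms so that pulling back gives commuting embeddings, check the superamalgam conditions, get an $\mathbf{iL}$-algebra from the hypothesis, and conclude via the preceding superamalgamation-implies-CIP proposition; your separation argument fills in the superamalgamation step that the paper only cites. In that step, commit to your fallback: $g[Y_2\setminus U_2]$ need not be a downset, but the upset $f[U_1]$ misses the closed downset ${\downarrow}g[Y_2\setminus U_2]$, so Priestley separation plus compactness gives the required clopen upset $a$.
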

\begin{proof}
    To show that the algebra $\mathsf{Up}(\mathsf{Pb}(f,g))$ forms an amalgam, it suffices to show, by duality,  that $\pi_{Y_{1}}$ and $\pi_{Y_{2}}$ are surjective, $\leq$-p-morphisms and $R$-bounded morphisms, and that the diagram commutes. It is trivial to see by the definition of $\mathsf{Pb}(f,g)$ that the diagram commutes. The fact that it is surjective follows by the arguments as in the case of intuitionistic logic \cite{Maksimova1977}, and for similar reasons, that $\pi_{Y_{i}}$ are $\leq$-p-morphisms. The fact that they are $R$-bounded morphisms also follows the classical modal logic case (see e.g., \cite{BtCI26}). The fact that it superamalgamates is similar to the proof given in \cite{paperonnondistributivedualities}. Finally, the fact that the resulting algebra $\mathsf{Up}(\mathsf{Pb}(f,g))$ is an $\mathbf{iL}$-algebra follows from the fact that $\mathsf{Pb}(f,g)$ is an $\mathsf{iL}$-frame, by definition.
\end{proof}

\begin{corollary}
    The logics $\mathsf{iT},\mathsf{iK4},\mathsf{iS4}$ have the CIP.
\end{corollary}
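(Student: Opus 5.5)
The plan is to combine Proposition~\ref{prop: CIP from pullback} with a verification that the pullback $\mathsf{Pb}(f,g)$ of any dual co-V-formation is an $\mathbf{iL}$-frame. The pullback should carry the componentwise order and relation, that is $(y_1,y_2)\leq(y_1',y_2')$ iff $y_i\leq y_i'$ for both $i$, and $(y_1,y_2)R(y_1',y_2')$ iff $y_iRy_i'$ for both $i$. By Proposition~\ref{prop: equivalence of algebras and some spaces} the dual spaces $Y_1,Y_2,X$ of $\mathbf{iL}$-algebras have reflexive (for $\mathsf{iT}$), transitive (for $\mathsf{iK4}$), or reflexive and transitive (for $\mathsf{iS4}$) relations $R$. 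By Proposition~\ref{prop: closure under frames} it then suffices to show two things: that $\mathsf{Pb}(f,g)$ is an iK-frame, and that its $R$ inherits reflexivity and/or transitivity.

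The inheritance is immediate because $R$ on the pullback is defined coordinatewise. If both $R$'s are reflexive, then $(y_1,y_2)R(y_1,y_2)$. If both are transitive, composing coordinatewise stays inside $\mathsf{Pb}(f,g)$, since membership only depends on the endpoints. The iK-compatibility ${\leq}\circ R\circ{\leq}=R$ holds coordinatewise for the same reason: if $(u_1,u_2)\leq(v_1,v_2)R(w_1,w_2)\leq(t_1,t_2)$ with all four points in the pullback, then $u_i\leq v_iRw_i\leq t_i$ gives $u_iRt_i$ in $Y_i$. Hence $(u_1,u_2)R(t_1,t_2)$. The order is a partial order coordinatewise, so $\mathsf{Pb}(f,g)$ is an iK-frame. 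It is an $\mathbf{iL}$-frame by Proposition~\ref{prop: closure under frames}.

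This alone does not finish the argument, because Proposition~\ref{prop: CIP from pullback} also relies on the projections being surjective $\leq$-p-morphisms and $R$-bounded morphisms, and I would recheck this. Surjectivity follows since $f,g$ are surjective: given $y_1$, choose $y_2$ with $g(y_2)=f(y_1)$. The $R$-back condition is handled as follows. Given $(y_1,y_2)$ in the pullback and $y_1Ry_1'$, we get $f(y_1)Rf(y_1')$. Since $g(y_2)=f(y_1)$, the back condition for $g$ yields $y_2Ry_2'$ with $g(y_2')=f(y_1')$. Then $(y_1',y_2')$ is an $R$-successor in $\mathsf{Pb}(f,g)$. The $\leq$-back condition is identical, using that $g$ is a $\leq$-p-morphism. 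The forth conditions are trivial.

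The main obstacle is topological. $\mathsf{Pb}(f,g)$ is a closed subspace of $Y_1\times Y_2$, hence a Priestley space, but it need not be an Esakia space: downsets of clopens need not be clopen. This is why Proposition~\ref{prop: CIP from pullback} passes to $\mathsf{Up}(\mathsf{Pb}(f,g))$, the algebra of all upsets, rather than clopen upsets. I would therefore argue entirely at the frame level. The maps $B_i\to\mathsf{Up}(\mathsf{Pb}(f,g))$, given by $U\mapsto\pi_i^{-1}[U]$ on clopen upsets, are injective Heyting homomorphisms preserving $\Box_R$, since $\pi_i$ are surjective p-morphisms in both relations. This algebra is an $\mathbf{iL}$-algebra because the pullback is an $\mathbf{iL}$-frame. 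The superamalgamation condition then reduces to a compactness argument: if $\pi_1^{-1}[U_1]\subseteq\pi_2^{-1}[U_2]$, then ${\uparrow}f[U_1]$ and $g[U_2]$ are separated by a clopen upset of $X$. This is the standard Maksimova argument, which I would cite as in Proposition~\ref{prop: CIP from pullback}. Together with the proposition relating superamalgamation to CIP, this yields the corollary for $\mathsf{iT}$, $\mathsf{iK4}$ and $\mathsf{iS4}$.
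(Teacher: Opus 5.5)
Your proposal is correct and follows the paper's proof. You reduce via Proposition~\ref{prop: CIP from pullback} to showing that $\mathsf{Pb}(f,g)$ is an $\mathbf{iL}$-frame, and you get reflexivity and transitivity coordinatewise from the dual spaces; your explicit check of ${\leq}\circ R\circ{\leq}=R$ on the pullback is a step the paper leaves implicit.

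There is one slip in your optional superamalgamation sketch. The clopen upset $a$ must contain $f[U_1]$ and be disjoint from ${\downarrow}g[Y_2\setminus U_2]$, so that $U_1\subseteq f^{-1}[a]$ and $g^{-1}[a]\subseteq U_2$. It is not a matter of separating ${\uparrow}f[U_1]$ from $g[U_2]$. The required disjointness follows from your hypothesis together with the p-morphism property of $f$.
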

\begin{proof}
    By Proposition \ref{prop: CIP from pullback}, we just need to show that $\mathsf{Pb}(f,g)$ is an $\mathsf{iL}$-frame. We do this for $\mathsf{iT}$, the other cases being similar. Note that by Proposition \ref{prop: equivalence of algebras and some spaces}, the relations $R$ in the spaces $(X,Y_{1},Y_{2},f,g)$ will all be reflexive. Then the relation defined pointwise in $Y_{1}\times Y_{2}$ will likewise be reflexive, and since $\mathsf{Pb}(f,g)$ is a subposet, and sub-relations of reflexive relations are reflexive, this will also be reflexive, i.e., $(\mathsf{Pb}(f,g),R)$ will be a reflexive frame. Since products and substructures of transitive structures are again transitive, this similarly goes through for $\mathsf{iK4}$ and $\mathsf{iS4}$.
\end{proof}

\end{document}